\newcommand{\real}{\mathbb{R}}
\newcommand{\rn}{\real^N}
\newcommand{\eps}{\varepsilon}
\newcommand{\ep}{\epsilon}
\newcommand{\de}{\delta}
\newcommand{\D}{\Delta}
\newcommand{\lam}{\lambda}
\newcommand{\wt}{\widetilde}
\newcommand{\diff}{\,\mathrm{d}}
\renewcommand\emptyset{\mbox{\Large \o}}
\newcommand{\sm}{\setminus}
\newcommand{\x}{\times}
\newcommand{\les}{\leqslant}
\newcommand{\ges}{\geqslant}
\newcommand{\calI}{\mathcal{I}}
\newcommand{\calS}{\mathcal{S}}
\newcommand{\calC}{\mathcal{C}}
\newcommand{\disp}{\displaystyle}
\DeclareMathOperator \di{div}
\let\le=\leqslant
\let\ge= \geqslant
\newtheorem{theorem}{Theorem}
\newtheorem{lemma}{Lemma}
\newtheorem{proposition}[theorem]{Proposition}
\newtheorem{corollary}[theorem]{Corollary}
\newtheorem{remark}{Remark}
\begin{document}

\title[Bifurcation for the $p$-Laplacian]
{Bifurcation along curves for the $p$-Laplacian with radial symmetry}
\author{Fran\c cois Genoud}
\thanks{This work was supported by the Engineering and Physical Sciences 
Research Council [EP/H030514/1].} 
\address{Department of Mathematics and the Maxwell Institute for
Mathematical Sciences, Heriot-Watt University,
Edinburgh EH14 4AS, United Kingdom.}
\email{F.Genoud@ma.hw.ac.uk}

\begin{abstract}
We study the global structure of the set of radial solutions of the nonlinear 
Dirichlet problem
\begin{equation}\label{eq}\tag{1}
\setlength\arraycolsep{0.05cm}
\left\{ \begin{array}{rlll}
-\D_p (u) &= &\lam f(|x|,u) & \  \text{in} \ \Omega,\\
u &= &0 & \ \text{on} \ \partial \Omega,
\end{array}\right.
\end{equation}
where $\D_p(u):=\di(|\nabla u|^{p-2}\nabla u)$ is the $p$-Laplacian,
$p>2$, and $\Omega$ is the unit ball in $\rn$, $N\ge1$. 
The function $f$ is continuous, differentiable in its second argument, 
and satisfies $f(r,0)=0$ for all $r\in[0,1]$.
We show that all non-trivial radial solutions of \eqref{eq} lie
on smooth curves of respectively positive and negative solutions, parametrized by 
$\lam>0$, and bifurcating from the line of trivial solutions. This involves a local
bifurcation result of Crandall-Rabinowitz type, and global continuation arguments 
relying on monotonicity properties of $f$.
Furthermore, by prescribing the behaviour of $f(r,\xi)$ as $|\xi|\to\infty$, we control
the asymptotic behaviour of large solutions.
An important part of the analysis is dedicated to the delicate issue of differentiability of 
the inverse $p$-Laplacian.

We thus obtain a complete description of the
global continua of positive/negative solutions bifurcating from the first eigenvalue
of a weighted, radial, $p$-Laplacian problem, 
by using purely analytical arguments, whereas previous related
results were proved by topological arguments or a mixture of analytical and topological 
arguments. Our approach
requires stronger hypotheses but yields much stronger results, bifurcation occuring
along smooth curves of solutions, and not only connected sets.

\end{abstract}

\maketitle

\section{Introduction}

In this paper we consider the nonlinear Dirichlet problem
\begin{equation}\label{dir.eq}
\setlength\arraycolsep{0.05cm}
\left\{ \begin{array}{rlll}
-\D_p (u) &= &\lam f(|x|,u) & \  \text{in} \ \Omega,\\
u &= &0 & \ \text{on} \ \partial \Omega,
\end{array}\right.
\end{equation}
where $\D_p(u):=\di(|\nabla u|^{p-2}\nabla u)$ is the $p$-Laplacian, 
$p>1$, $\lam \ge0$, and $\Omega$ is the unit ball in $\rn$, $N\ge1$. 
The function $f$ is continuous, such that $f(r,0)=0$ for all $r\in[0,1]$, and will be subject
to various additional assumptions.

We look for $C^1$ radial solutions by studying the
problem
\begin{equation}\label{dirad.eq}
\setlength\arraycolsep{0.05cm}
\left\{ \begin{array}{rlll}
-(r^{N-1}\phi_p(u'))' &= &\lam r^{N-1} f(r,u), \quad 0<r<1,\\
u'(0) = u(1) &= &0,
\end{array}\right.
\end{equation}
where $\phi_p(\xi):=|\xi|^{p-2}\xi, \ \xi\in\real$, and $'$ denotes 
differentiation with respect to $r$. By a solution of \eqref{dirad.eq}
will be meant a couple $(\lam,u)$,
with $\lam \in \real$ and $u\in C^1[0,1]$, such that $\phi_p(u')\in C^1[0,1]$, that satisfies \eqref{dirad.eq}.
Note that, since $f(r,0)=0$ for all $r\in[0,1]$, $(\lam,0)$ is a solution for all $\lam\in\real$. Such solutions will be called trivial. We are interested in existence and bifurcation of 
non-trivial solutions of \eqref{dirad.eq}.

Bifurcation results for quasilinear equations in bounded domains have been considered
for instance in \cite{gs,dm,t} --- further references can be found in these papers. Del Pino
and Man\'asevich \cite{dm} prove global bifurcation from the first eigenvalue of the 
$p$-Laplacian in a general bounded domain, and global bifurcation from every 
eigenvalue in the radial case. They also obtain nodal properties of solutions along the 
bifurcating continua. These results generalize the well known results of 
Rabinowitz \cite{rab} to the quasilinear setting, using degree theoretic arguments. 

More recently, Girg and Tak\'a\v c \cite{t}
obtained results in the spirit of Dancer \cite{dan}, about bifurcation from the
first eigenvalue of an homogeneous quasilinear problem, 
in the cones of positive and negative solutions. 
They consider a large class of quasilinear problems in a general bounded domain 
$\Omega$ and they allow the
asymptotic problems as $|u|\to0/\infty$ to depend on $x\in\Omega$. 
They also prove their
results using topological arguments, combined with a technical asymptotic analysis.

The last contribution we want to mention here, which is probably the most closely related to our work, is the paper by Garc\'ia-Meli\'an and Sabina de Lis \cite{gs}. The famous
Crandall-Rabinowitz theorem \cite{cr} is extended in \cite{gs} to $p$-Laplacian
equations, in the radial setting, see \cite[Theorem~1]{gs}. 
This result yields a continuous local branch of solutions
bifurcating from every eigenvalue of the $p$-Laplacian, and uniqueness of the branch
in a neighbourhood of the bifurcation point. Garc\'ia-Meli\'an and 
Sabina de Lis then use this local result to obtain further information about the global 
structure of the continua of solutions obtained by the topological method in \cite{dm}.
In particular, they show that there exist
two unbounded continua $\calC^\pm$ of respectively
positive and negative solutions, which only meet at the bifurcation point 
$(\lam_0,0) \in \real\x C^1[0,1]$,
where $\lam_0$ is the first eigenvalue of the homogeneous problem
\begin{equation}\label{hom}
\setlength\arraycolsep{0.05cm}
\left\{ \begin{array}{rlll}
-(r^{N-1}\phi_p(u'))' &= &\lam r^{N-1} \phi_p(u), \quad 0<r<1,\\
u'(0) = u(1) &= &0.
\end{array}\right.
\end{equation}
Since \cite[Theorem~1]{gs} is only a local result, it is not known from \cite{gs} whether
the global continua $\calC^\pm$ are actually continous curves or only connected sets.
In fact, the picture obtained from \cite{gs} is somewhat hybrid, due to a mixture of
analytical arguments (essentially the implicit function theorem) used to get local 
bifurcation, and the topological method yielding the global continua $\calC^\pm$
in \cite{dm}.

Our main purpose in this paper is to show that, under additional assumptions
on the function $f$ in \eqref{dirad.eq}
--- in particular monotonicity assumptions ---, 
it is possible to describe the global structure of
solutions bifurcating from the first eigenvalue using purely analytical arguments. In fact,
we obtain smooth curves of respectively positive and negative 
solutions, parametrized by the bifurcation parameter $\lam$. 

Besides, we consider 
a more general homogeneous problem than \eqref{hom} in the limit $|u|\to0$. In fact, 
we allow both asymptotics as $|u|\to0/\infty$ to depend on $r\in[0,1]$, in the same spirit
as \cite{t}. The asymptotic problems as $|u|\to0/\infty$ --- see
equations \eqref{eigen} below --- are weighted homogeneous problems 
respectively associated with the asymptotes 
$$
f_0(r):=\disp\lim_{\xi\to0}\frac{f(r,\xi)}{\phi_p(\xi)}>0 \quad\text{and}\quad
f_\infty(r):=\lim_{|\xi|\to\infty}\frac{f(r,\xi)}{\phi_p(\xi)}>0, \quad r\in[0,1].
$$ 
The properties of ($\mathrm{E}_0$) enable us to obtain a 
local bifurcation theorem as in \cite{gs}, while
the asymptotic problem ($\mathrm{E}_\infty$) governs
the behaviour as $|u|\to\infty$. 

We will consider two different situations. In the first case, we will assume that $f(r,\xi)>0$
for all $(r,\xi)\in[0,1]\x\real^*$ and $f(r,0)=0$ for all $r\in[0,1]$. It follows that the set
of non-trivial solutions of \eqref{dirad.eq} is a smooth curve of positive solutions ---
see Theorem~\ref{main.thm}. If we rather assume that $f(r,\xi)\xi>0$ for 
$(r,\xi)\in[0,1]\x\real^*$ and $f(r,0)\equiv0$, 
then we get two smooth curves of respectively positive and
negative solutions, containing all non-trivial solutions of \eqref{dirad.eq} --- see
Theorem~\ref{main2.thm}.
Furthermore, if $N=1$, we
are also able to deal with the case where $f$ is `sublinear' at infinity, that is,
$f(r,\xi)/\phi_p(\xi)\to0$ as $|\xi|\to\infty$, uniformly for $r\in[0,1]$.

A one-dimensional problem similar to \eqref{dirad.eq} was studied by Rynne in 
\cite{r10}, from which the present work is substantially inspired. In particular, the explicit
form we get for the inverse $p$-Laplacian in the radial setting allows us to study
the differentiability of this operator following arguments of \cite{br}, where the
one-dimensional $p$-Laplacian was considered. This differentiability issue is probably
the most delicate part of the analysis. It should be noted that the results regarding the
inverse $p$-Laplacian in Section~\ref{inverse.sec} hold for any $p>1$, while we had
to restrict ourselves to $p>2$ in the bifurcation analysis for other differentiability
reasons --- see Remark~\ref{diff.rem}.

We conclude this section by a brief description of the content of the paper. In 
Section~\ref{results.sec}, we give some information about the
functional setting, our precise hypotheses, 
and we state our main results, Theorems~\ref{main.thm} and
\ref{main2.thm}. Then, in Section~\ref{inverse.sec}, we study an integral operator
corresponding to the inverse of the $p$-Laplacian in \eqref{dirad.eq}. The main 
results about this operator are Theorems~\ref{Spcont.thm} and \ref{Sp.thm}. It should
be noted that \cite{gs} already dealt with differentiability results similar to those of
Theorem~\ref{Sp.thm}. However, we believe that the discussion in \cite{gs} is incomplete
and so Theorem~\ref{Sp.thm} is of importance in its own right. In
Section~\ref{prop.sec}, we establish some {\it a priori} properties of solutions
of \eqref{dirad.eq}, notably positivity/negativity, as well as the asymptotic  behaviour of
solutions  $(\lam,u)$ as $|u|\to0/\infty$. 
Section~\ref{cranrab.sec} is devoted to the local bifurcation analysis,
where we establish, in particular, a Crandall-Rabinowitz-type result, 
Lemma~\ref{cranrab.lem}. Finally, the proofs of Theorems~\ref{main.thm} and
\ref{main2.thm} are completed in Section~\ref{global.sec}, where we show that
the local branches of solutions obtained in Section~\ref{cranrab.sec} can be extended
globally.


\section{Setting and main results}\label{results.sec}

We will work in various function spaces. We will denote by $L^1(0,1)$
the Banach space of real Lebesgue integrable functions
over $(0,1)$ and by $W^{1,1}(0,1)$ the Sobolev space of functions $u\in L^1(0,1)$ 
having a weak derivative $u' \in L^1(0,1)$.
$C^n[0,1]$ will denote the space of $n$ times continuously differentiable 
functions, with the usual sup-type norm $|\cdot|_n$.

In our operator formulation of \eqref{dirad.eq}, 
it will be convenient to use the shorthand notation
$$
X_p := \{u\in C^1[0,1] : \phi_p(u') \in C^1[0,1] \ \text{and} \ u'(0)=u(1)=0\} ,
\quad Y:=C^0[0,1].
$$

An important part of our discussion in the next section will concern the
differentiability of an integral operator, that will depend on the value of $p>1$.
This analysis will rely on results in \cite{br}, and we borrow the following notation
from there:
\begin{equation}\label{bp.eq}
B_p := \begin{cases} C^1[0,1], \quad 1< p \le 2,\\
W^{1,1}(0,1), \quad p>2.
\end{cases}
\end{equation}

However, our main results require $p\ge2$ and, apart from Section~\ref{inverse.sec},
we will suppose $p>2$ throughout the paper --- the results are well known for $p=2$.

Denoting by $\partial_2f$ the partial derivative of $f$ with respect to $\xi\in\real$,
we make the following hypotheses on the continuous function $f:[0,\infty)\x\real\to\real$:
\begin{itemize}
\item[(H1)] $f(r,\cdot)\in C^1(\real)$ for all $r\in [0,1]$ and 
$\partial_2f \in C^0([0,1]\x\real)$;
\item[(H2)] $f(r,\xi)>0$ for $(r,\xi) \in [0,1]\x\real^*$ and $f(r,0)\equiv0$;
\item[(H3)] $(p-1)f(r,\xi) \ges \partial_2f(r,\xi)\xi$ for $(r,\xi) \in [0,1]\x[0,\infty)$, and there
exist $\de,\ep>0$ such that $(p-1)f(r,\xi) > \partial_2f(r,\xi)\xi$ 
for all $(r,\xi)\in(1-\de,1]\x (0,\ep)$.
\end{itemize}
It follows from (H3) that, for any fixed $r\in[0,1]$, the mapping 
$\xi \mapsto f(r,\xi)/\phi_p(\xi)$ 
is decreasing on $(0,\infty)$. Therefore, there exist functions $f_0, f_\infty:[0,1]\to\real$ 
such that
$$f(r,\xi)/\phi_p(\xi) \to f_{0/\infty}(r) \quad\text{as} \ \xi \to 0^+/\infty$$
and
\begin{equation}\label{gbounds}
0\les f_\infty(r) \les f(r,\xi)/\phi_p(\xi) \les f_0(r) 
\quad\text{for all} \ (r,\xi) \in [0,1]\x(0,\infty).
\end{equation}
We will further assume that $f_0, f_\infty \in C^0[0,1]$ and (for $p>2$)
\begin{itemize}
\item[(H4)] $\disp\lim_{\xi\to0^+}|f(\cdot,\xi)/\phi_p(\xi) - f_0|_0 
= \lim_{\xi\to0^+}|\partial_2 f(\cdot,\xi)/\xi^{p-2} - (p-1) f_0|_0 = 0$;
\item[(H5)] $\disp\lim_{\xi\to\infty}|f(\cdot,\xi)/\phi_p(\xi) - f_\infty|_0 = 0$.
\end{itemize}

\begin{remark}\label{sign.rem}\rm
\item[(i)] Note that (H2) and \eqref{gbounds} imply $f_0>0$ on $[0,1]$.
\item[(ii)] Also, (H3) implies that, for $r\in(1-\de,1]$, 
the function $\xi \mapsto f(r,\xi)/\phi_p(\xi)$ 
is not constant on $(0,\infty)$. In particular, $f_\infty \not\equiv f_0$ on $[0,1]$.
\end{remark}

To state our main results, 
we need to relate problem \eqref{dirad.eq} to the homogeneous 
eigenvalue problems corresponding to the asymptotes $f_0, f_\infty \in C^0[0,1]$:
\begin{equation}\label{eigen}\tag{$\mathrm{E}_{0/\infty}$}
\setlength\arraycolsep{0.05cm}
\left\{ \begin{array}{rlll}
-(r^{N-1}\phi_p(v'))' &= &\lam r^{N-1} f_{0/\infty}(r)\phi_p(v), \quad 0<r<1,\\
v'(0) = v(1) &= &0.
\end{array}\right.
\end{equation}

The following result follows from \cite[Sec.~5]{w}.
\begin{lemma}\label{eigen.lem}
If $f_{0/\infty}>0$ on $[0,1]$ then problem \eqref{eigen} has a simple 
eigenvalue $\lam_{0/\infty}>0$ with a corresponding eigenfunction
$v_{0/\infty}>0$ in $[0,1)$, and no other eigenvalue having a positive 
eigenfunction. Furthermore, $f_\infty \ \substack{\les \\ \not\equiv} \ f_0$ implies 
$\lam_0<\lam_\infty$.
\end{lemma}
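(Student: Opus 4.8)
The plan is to invoke the general variational theory for weighted radial $p$-Laplacian eigenvalue problems. First I would recast \eqref{eigen} as the problem of finding critical points of the Rayleigh-type quotient
\[
R[v] = \frac{\int_0^1 r^{N-1}|v'|^p \diff r}{\int_0^1 r^{N-1}f_{0/\infty}(r)|v|^p \diff r}
\]
over the weighted Sobolev space of radial functions vanishing at $r=1$. Since $f_{0/\infty}\in C^0[0,1]$ with $f_{0/\infty}>0$ on $[0,1]$, the denominator defines an equivalent weighted $L^p$-norm, and the results quoted from \cite[Sec.~5]{w} give that the infimum $\lam_{0/\infty} = \inf_v R[v]$ is attained, is strictly positive, and is a \emph{simple} eigenvalue (the one-dimensional, singular ODE structure with the weight $r^{N-1}$ is exactly the setting treated there). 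The corresponding minimizer can be taken nonnegative, and a strong maximum principle / Hopf-type argument for the radial $p$-Laplacian — again available from the cited reference — upgrades this to $v_{0/\infty}>0$ on $[0,1)$. The fact that no eigenvalue other than $\lam_{0/\infty}$ admits a positive eigenfunction is the standard consequence of simplicity of the principal eigenvalue together with the oscillation/nodal structure of higher eigenfunctions: any eigenfunction associated with $\lam>\lam_{0/\infty}$ must change sign.

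For the monotonicity statement, suppose $f_\infty \les f_0$ on $[0,1]$ with $f_\infty\not\equiv f_0$. Let $v_\infty>0$ be the principal eigenfunction for $(\mathrm E_\infty)$, normalized by $\int_0^1 r^{N-1}f_0(r)|v_\infty|^p\diff r = 1$. Then, using $v_\infty$ as a test function in the variational characterization of $\lam_0$,
\[
\lam_0 \les \frac{\int_0^1 r^{N-1}|v_\infty'|^p\diff r}{\int_0^1 r^{N-1}f_0(r)|v_\infty|^p\diff r}
= \int_0^1 r^{N-1}|v_\infty'|^p\diff r
= \lam_\infty \int_0^1 r^{N-1}f_\infty(r)|v_\infty|^p\diff r
\les \lam_\infty,
\]
the last inequality because $\int_0^1 r^{N-1}f_\infty|v_\infty|^p \les \int_0^1 r^{N-1}f_0|v_\infty|^p = 1$. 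To get the \emph{strict} inequality $\lam_0<\lam_\infty$, note that equality throughout would force $\int_0^1 r^{N-1}(f_0-f_\infty)|v_\infty|^p\diff r = 0$; since $f_0-f_\infty\ges0$, $f_0-f_\infty\not\equiv0$ and $v_\infty>0$ on $[0,1)$, this integral is strictly positive, a contradiction. (Equality would also force $v_\infty$ to be a minimizer for $R$ at level $\lam_0$, hence proportional to $v_0$, but the sign argument already suffices.)

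The main obstacle, if any, is purely one of bookkeeping rather than of substance: I must make sure the singular weight $r^{N-1}$, which degenerates at $r=0$ when $N=1$ is excluded but is genuinely singular for $N\ge2$, is handled within the functional framework of \cite{w} — in particular that the weighted Sobolev space is well-defined, compactly embedded in the weighted $L^p$-space, and that the minimizer is regular enough ($C^1$ with $\phi_p(v')\in C^1$) to be a classical solution of the ODE in \eqref{eigen}. All of this is precisely what is established in \cite[Sec.~5]{w}, so the proof amounts to quoting that reference for the existence, simplicity, positivity and uniqueness-of-sign claims, and then supplying the short test-function computation above for the comparison $\lam_0<\lam_\infty$.
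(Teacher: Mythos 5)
Your proposal is correct, but it takes a genuinely different route from the paper, which in fact offers no proof at all here: the lemma is stated as a direct consequence of \cite[Sec.~5]{w}, whose machinery is a generalized Pr\"ufer transformation with Sturm-type comparison and oscillation theorems for the singular ODE, not a variational argument. For the first part (existence, simplicity, positivity of $v_{0/\infty}$ on $[0,1)$, and the fact that only the principal eigenvalue admits a one-signed eigenfunction) both routes deliver the same facts, though you should note that you are quoting an ODE/shooting reference to underwrite a minimization statement: to run your test-function computation you need the additional (standard, but not automatic) fact that the eigenvalue constructed in \cite{w} coincides with the infimum of the Rayleigh quotient over the weighted space. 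Granting that, your comparison argument is sound: the integration by parts giving $\int_0^1 r^{N-1}|v_\infty'|^p\diff r=\lam_\infty\int_0^1 r^{N-1}f_\infty|v_\infty|^p\diff r$ is justified by $v_\infty'(0)=v_\infty(1)=0$, and continuity of $f_0-f_\infty\ges0$, $\not\equiv0$, together with $v_\infty>0$ on $[0,1)$, forces $\int_0^1 r^{N-1}(f_0-f_\infty)|v_\infty|^p\diff r>0$, so you in fact obtain $\lam_0\le\lam_\infty\int_0^1 r^{N-1}f_\infty|v_\infty|^p\diff r<\lam_\infty$ in one step, without the equality-case discussion. The alternative closer to the paper's intent (and used explicitly in the proof of Lemma~\ref{asympt.lem}) is to deduce $\lam_0<\lam_\infty$ directly from the Sturmian comparison theorem of \cite[Sec.~4]{w} applied to the two weights, which bypasses the variational characterization entirely; your version buys a self-contained, more elementary comparison at the cost of having to reconcile the variational and shooting definitions of the principal eigenvalue.
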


We know from Remark \ref{sign.rem} that $f_0>0$ and $f_\infty \ \substack{\les \\ \not\equiv} \ f_0$. For $\lam_\infty$ to be well-defined, we will still make the following assumption.
\begin{itemize}
\item[(H6)] Either\\ 
(a) $N\ge1$ is arbitrary and $f_\infty>0$ on $[0,1]$, or\\
(b) $N=1$ and $f_\infty\equiv0$ on $[0,1]$.\\
If (a) holds, $\lam_\infty>0$ is defined in Lemma~\ref{eigen.lem}; 
if (b) holds, we set $\lam_\infty=\infty$.
\end{itemize}

We are now in a position to state our first result about the solutions of \eqref{dirad.eq}.
From now on, we will refer to the collection of hypotheses (H1) to (H6) as (H).

\begin{theorem}\label{main.thm}
Suppose that $p > 2$. If (H) holds,
there exists 
$u\in C^1((\lam_0,\lam_\infty), Y)$ such that $u(\lam) \in X_p$, $u(\lam)>0$ on $[0,1)$ 
and, for any $\lam \in (\lam_0,\lam_\infty)$, $(\lam,u(\lam))$ is the unique 
non-trivial solution of \eqref{dirad.eq}. Furthermore,
\begin{equation}\label{asympt.eq}
\lim_{\lam\to\lam_0} |u(\lam)|_0=0 \quad\text{and}\quad 
\lim_{\lam\to\lam_\infty} |u(\lam)|_0=\infty.
\end{equation}
\end{theorem}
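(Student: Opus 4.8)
The plan is to reformulate \eqref{dirad.eq} as a fixed-point problem $u = \lam\,S_p(f(\cdot,u))$ using the inverse $p$-Laplacian $S_p:Y\to X_p$, whose continuity (Theorem~\ref{Spcont.thm}) and differentiability (Theorem~\ref{Sp.thm}) are established in Section~\ref{inverse.sec}. One first combines the positivity hypothesis (H2) with the a priori analysis of Section~\ref{prop.sec} to show that any non-trivial solution $(\lam,u)$ must satisfy $u>0$ on $[0,1)$, $\lam>0$, and that the map $\lam\mapsto|u(\lam)|_0$ is strictly monotone; moreover, testing against the eigenfunctions $v_{0/\infty}$ of \eqref{eigen} and using the sandwich bound \eqref{gbounds} forces $\lam\in(\lam_0,\lam_\infty)$, since $f_\infty(r)\le f(r,\xi)/\phi_p(\xi)\le f_0(r)$ pins the admissible parameter range for a positive solution.

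Next I would run the local bifurcation analysis of Section~\ref{cranrab.sec}: linearising the fixed-point operator at $(\lam_0,0)$ and invoking (H1), (H4) together with the differentiability of $S_p$ (this is where $p>2$ enters, cf.\ Remark~\ref{diff.rem}), one checks the Crandall--Rabinowitz transversality and simplicity conditions at the simple eigenvalue $\lam_0$ of $(\mathrm{E}_0)$ given by Lemma~\ref{eigen.lem}, obtaining via Lemma~\ref{cranrab.lem} a unique $C^1$ local curve $\lam\mapsto u(\lam)$ of positive solutions emanating from $(\lam_0,0)$, with $|u(\lam)|_0\to0$ as $\lam\to\lam_0$. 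The monotonicity in (H3) — equivalently, that $\xi\mapsto f(r,\xi)/\phi_p(\xi)$ is decreasing — gives local uniqueness of the non-trivial solution near the branch and, crucially, prevents secondary bifurcations, so the implicit function theorem continues the curve as long as the linearisation stays invertible.

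The global continuation is then a connectedness/open-closed argument on the maximal interval $I\subseteq(\lam_0,\lam_\infty)$ over which the $C^1$ curve of positive solutions exists and is unique. Openness is the implicit function theorem; closedness requires ruling out blow-up of $|u(\lam)|_0$ at interior points of $I$ and ruling out the branch ``turning back'', both of which follow from the strict monotonicity of $\lam\mapsto|u(\lam)|_0$ (a consequence of (H3), via a comparison argument with the linear eigenvalue problems) and from a priori bounds on compact $\lam$-intervals bounded away from $\lam_\infty$. As $\lam\to\lam_\infty$, the bound \eqref{gbounds} and the characterisation of $\lam_\infty$ from $(\mathrm{E}_\infty)$ (using (H5), (H6)) force $|u(\lam)|_0\to\infty$; in case (H6)(b) with $N=1$ and $f_\infty\equiv0$, $\lam_\infty=\infty$ and the sublinear-at-infinity estimate supplies solutions for every large $\lam$. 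Finally, any non-trivial solution is positive and, by local uniqueness plus monotonicity, must lie on this curve, giving the global uniqueness claim.

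The main obstacle will be the closedness step, specifically establishing that $|u(\lam)|_0$ cannot blow up at a finite interior $\lam_*<\lam_\infty$: this needs the monotonicity hypothesis (H3) to be leveraged quantitatively, comparing $u(\lam)$ with the positive eigenfunction $v_\infty$ to obtain a lower bound on the growth rate, together with the asymptotic control from (H5) — together these squeeze the parameter to stay below $\lam_\infty$ precisely when $|u|_0<\infty$. A secondary technical difficulty is maintaining $C^1$-regularity of the curve through the differentiability of $S_p$, where the $W^{1,1}$-based space $B_p$ from \eqref{bp.eq} and the restriction $p>2$ are essential.
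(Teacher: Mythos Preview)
Your outline has the right skeleton --- positivity, local Crandall--Rabinowitz bifurcation at $\lam_0$, global continuation via the implicit function theorem, asymptotics from the limiting eigenvalue problems --- but it leaves out the single computation on which the whole continuation hinges, and it substitutes a monotonicity claim that the paper neither proves nor needs.

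The real gap is here: you write ``the implicit function theorem continues the curve as long as the linearisation stays invertible,'' but you never say \emph{why} it stays invertible. This is precisely Lemma~\ref{IFT.lem} in the paper, and it is not a corollary of ``(H3) prevents secondary bifurcations'' in any soft sense. The proof is a concrete Lagrange-type identity: if $v\in N(D_uF(\lam,u))$, one multiplies \eqref{dirad.eq} by $v$, the linearised equation \eqref{eqforv} by $u$, subtracts, integrates by parts, and obtains
\[
r^{N-1}|u'|^{p-2}(uv'-u'v)(r)=\lam\int_0^r s^{N-1}\bigl[f(u)-p^*\partial_2f(u)u\bigr]v\diff s,
\]
whereupon the sign condition in (H3) forces $v\equiv0$. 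Without this, you have no mechanism to keep applying the implicit function theorem along the branch, and the global step collapses.

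Secondly, the paper never proves (or uses) strict monotonicity of $\lam\mapsto|u(\lam)|_0$. Your closedness argument rests on this, but it is an extra statement requiring its own proof. The paper instead proves Lemma~\ref{asympt.lem}: for any sequence $(\lam_n,u_n)\in\calS$, $|u_n|_0\to 0/\infty$ forces $\lam_n\to\lam_{0/\infty}$. This is obtained by a rescaling--compactness argument and Sturm comparison (not by testing against $v_{0/\infty}$). It is then used contrapositively: along the maximal curve, if $\wt\lam<\lam_\infty$ and $\lam_n\to\wt\lam$, blow-up of $|u(\lam_n)|_0$ is impossible (it would force $\wt\lam=\lam_\infty$), so compactness of $S_p$ yields a limit $(\wt\lam,\wt u)\in\calS$, and Lemma~\ref{IFT.lem} plus the implicit function theorem extend past $\wt\lam$, contradicting maximality. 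The limit $|u(\lam)|_0\to\infty$ as $\lam\to\lam_\infty$ is then automatic in case (H6)(a) (otherwise one could continue past $\lam_\infty$), and in case (H6)(b) it is a direct estimate (Lemma~\ref{blowup.lem}) using concavity of solutions when $N=1$.

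Finally, global uniqueness in the paper is not ``local uniqueness plus monotonicity.'' One takes any $(\bar\lam,\bar u)\in\calS$, runs the same continuation argument to get a second curve $\calS_1$ over $(\lam_0,\lam_\infty)$, then lets $\lam\to\lam_0$ along $\calS_1$: Lemma~\ref{asympt.lem} bounds $|u_1(\lam)|_0$, compactness gives a limit, and a Sturm comparison at $\lam=\lam_0$ forces this limit to be $0$, so $\calS_1$ enters the neighbourhood of Theorem~\ref{loc1.thm} and must coincide with $\calS_0$.
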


We will see in the proof of Theorem~\ref{main.thm} that the condition (H2) forces
the solutions of \eqref{dirad.eq} to be positive. If, instead of (H2) to (H5), 
we suppose:
\begin{itemize}
\item[(H2')] $f(r,\xi)\xi>0$ for $(r,\xi) \in [0,1]\x\real^*$ and $f(r,0)\equiv0$;
\item[(H3')] in addition to (H3),
$(p-1)f(r,\xi) \le \partial_2f(r,\xi)\xi$ for $(r,\xi) \in [0,1]\x(-\infty,0]$ and
$(p-1)f(r,\xi) < \partial_2f(r,\xi)\xi$ for all $(r,\xi)\in(1-\de,1]\x (-\ep,0)$;
\item[(H4')] $\disp\lim_{\xi\to0}|f(\cdot,\xi)/\phi_p(\xi) - f_0|_0 
= \lim_{\xi\to0}|\partial_2 f(\cdot,\xi)/|\xi|^{p-2} - (p-1) f_0|_0 = 0$;
\item[(H5')] $\disp\lim_{|\xi|\to\infty}|f(\cdot,\xi)/\phi_p(\xi) - f_\infty|_0 = 0$,
\end{itemize}
then the solutions need not be positive any more and we have the following result.
We refer to the collection of hypotheses (H1), (H2') to (H5') and (H6) as (H').

\begin{theorem}\label{main2.thm}
Suppose that $p > 2$. If (H') holds,
there exist $u_\pm\in C^1((\lam_0,\lam_\infty), Y)$ such that 
$u_\pm(\lam) \in X_p$, $\pm u_\pm(\lam)>0$ on $[0,1)$ and, 
for any $\lam \in (\lam_0,\lam_\infty)$, $(\lam,u_\pm(\lam))$ are the only
non-trivial solutions of \eqref{dirad.eq}. 
Furthermore, both $u_-$ and $u_+$ satisfy the limits in \eqref{asympt.eq}.
\end{theorem}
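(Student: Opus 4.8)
The plan is to reduce Theorem~\ref{main2.thm} to Theorem~\ref{main.thm} by treating the cones of positive and negative solutions separately. First I would observe that under (H') the nonlinearity satisfies $f(r,\xi)\xi>0$, so any non-trivial solution $(\lam,u)$ of \eqref{dirad.eq} has $u$ of one sign on $[0,1)$: indeed, the {\it a priori} positivity/negativity arguments of Section~\ref{prop.sec} (which, under (H2)/(H2'), rule out sign changes and interior zeros for solutions of the radial equation) apply verbatim once one knows the solution does not vanish identically. Thus the set of non-trivial solutions splits as a disjoint union of strictly positive and strictly negative solutions, and it suffices to describe each piece.

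Next I would handle the positive solutions. Restricting attention to $\xi\ge0$, the hypotheses (H1), (H2'), (H3'), (H4'), (H5'), (H6) restrict to exactly (H1)--(H6) for the function $f$ on $[0,1]\times[0,\infty)$ — in particular (H3') contains (H3), and (H4'),(H5') contain (H4),(H5) restricted to $\xi\to0^+$, $\xi\to+\infty$; also $f_0,f_\infty$ and hence $\lam_0,\lam_\infty$ are the same. Applying Theorem~\ref{main.thm} then produces the curve $\lam\mapsto u_+(\lam)\in C^1((\lam_0,\lam_\infty),Y)$ of positive solutions, unique among positive non-trivial solutions, with the asymptotics \eqref{asympt.eq}. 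For the negative solutions I would apply the {\it same} argument to $\tilde f(r,\xi):=-f(r,-\xi)$, which is continuous, $C^1$ in $\xi$, satisfies $\tilde f(r,0)\equiv0$ and $\tilde f(r,\xi)>0$ for $\xi>0$ (since $f(r,-\xi)<0$ there), with $\partial_2\tilde f(r,\xi)=\partial_2 f(r,-\xi)$; the inequality $(p-1)f(r,\eta)\le\partial_2f(r,\eta)\eta$ for $\eta\le0$ from (H3') translates precisely into $(p-1)\tilde f(r,\xi)\ge\partial_2\tilde f(r,\xi)\xi$ for $\xi\ge0$, i.e. (H3) for $\tilde f$; similarly the strict inequality on $(1-\de,1]\x(-\ep,0)$ gives the strict part of (H3) for $\tilde f$, and (H4'),(H5') yield (H4),(H5) for $\tilde f$. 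Moreover $\tilde f_{0/\infty}=f_{0/\infty}$, so $\tilde f$ has the same $\lam_0,\lam_\infty$. Theorem~\ref{main.thm} applied to $\tilde f$ gives a unique curve $\lam\mapsto w(\lam)>0$ of positive solutions; setting $u_-(\lam):=-w(\lam)$ gives a $C^1$ curve of negative solutions of \eqref{dirad.eq} satisfying \eqref{asympt.eq}, and uniqueness among negative solutions follows from the uniqueness for $\tilde f$. Combining the two cases with the sign trichotomy from the first paragraph gives that $(\lam,u_+(\lam))$ and $(\lam,u_-(\lam))$ exhaust all non-trivial solutions.

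The main obstacle, and the point requiring care rather than mere bookkeeping, is the sign trichotomy: one must be sure that the positivity/negativity results of Section~\ref{prop.sec} are genuinely established under (H2') (not only (H2)) for solutions of \eqref{dirad.eq}, so that no non-trivial solution changes sign or has an interior zero — otherwise the clean splitting into the two cones fails and the reduction to Theorem~\ref{main.thm} does not capture {\it all} solutions. Everything else is a routine verification that the transformations $f|_{\xi\ge0}$ and $\tilde f=-f(\cdot,-\cdot)$ preserve hypotheses (H1)--(H6) and leave $\lam_0,\lam_\infty$ unchanged, together with noting that $C^1$ dependence on $\lam$ and the norms $|\cdot|_0$ are invariant under $u\mapsto -u$.
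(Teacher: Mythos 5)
Your route---splitting the solution set into positive and negative cones and reducing each cone to Theorem~\ref{main.thm} via the reflection $\tilde f(r,\xi)=-f(r,-\xi)$---is genuinely different from the paper's, and the bookkeeping you do for the reflection (that (H3') for $\xi\le0$ becomes (H3) for $\tilde f$ on $\xi\ge0$, that $\tilde f_{0/\infty}=f_{0/\infty}$, etc.) is correct. But the proposal has a genuine gap exactly at the point you yourself flag: the sign trichotomy. It is not true that the positivity/negativity arguments of Section~\ref{prop.sec} ``apply verbatim'' under (H2'). Proposition~\ref{pos'.prop} \emph{presupposes} $\pm u\ge0$ (that is how $\calS^\pm$ is defined) and only upgrades this to strict sign; it does not exclude sign-changing solutions. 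The mechanism of Proposition~\ref{pos.prop} is that under (H2) one has $f(t,u(t))\ge0$ regardless of the sign of $u$, whence $\phi_p(u')\le0$ everywhere and $u$ is monotone; under (H2') the integrand $t^{N-1}f(t,u(t))$ changes sign with $u$, the monotonicity breaks down after the first interior zero, and nothing in Section~\ref{prop.sec} rules out solutions with interior zeros (the analogues of higher eigenfunctions). The Sturm comparison of \cite{w} does not obviously exclude such solutions from the window $(\lam_0,\lam_\infty)$ either. Since the exhaustiveness claim of Theorem~\ref{main2.thm} (``the only non-trivial solutions'') is precisely what fails without the trichotomy, and since your reduction to Theorem~\ref{main.thm} only yields uniqueness \emph{within each cone}, the proposal as written does not prove the theorem.

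For contrast, the paper never establishes an a priori pointwise trichotomy. Instead it runs the machinery directly under (H'): the two-sided local curve of Theorem~\ref{loc2.thm} (with \eqref{loc2-.eq}--\eqref{loc2+.eq} identifying the signs of the two ends near $(\lam_0,0)$), the nondegeneracy Lemma~\ref{IFT'.lem}, and global continuation produce two curves $\calS_0^\pm$ over $(\lam_0,\lam_\infty)$; every non-trivial solution is then shown to lie on one of them by continuing its component back to $\lam_0$, where it must collapse to $0$ and hence, by the local uniqueness \eqref{loc2.eq}, coincide with one of the two branches; finally an open-and-closed argument along each curve shows $\calS_0^\pm\subset\calS^\pm$. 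In other words, the sign information is propagated along the curves from the bifurcation point rather than derived pointwise. If you want to keep your reduction-by-symmetry strategy (which is attractive, since it recycles Theorem~\ref{main.thm} wholesale---note you would also need to extend $f|_{\xi\ge0}$ evenly to $\xi<0$ so that (H2) holds, which is harmless because (H3)--(H5) are one-sided), you must still supply a separate argument excluding sign-changing solutions for $\lam\in(\lam_0,\lam_\infty)$, and that argument is essentially the paper's continuation-to-the-bifurcation-point step.
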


We will prove Theorems~\ref{main.thm} and \ref{main2.thm} by giving the detailed
arguments for the case where (H) holds, and explaining what needs to be modified 
to account for (H').

\begin{remark}\rm
It should be noted that Theorems~\ref{main.thm} and \ref{main2.thm} yield
a complete description of the set of solutions of \eqref{dirad.eq}, and hence
the set of radial solutions of \eqref{dir.eq}. However, for $p\neq2$, there does not
hold a general result about the symmetry of solutions like the famous 
Gidas-Ni-Nirenberg result for the semilinear case \cite{gnn}. It is certainly an interesting
problem to investigate the symmetry of solutions of \eqref{dir.eq} under our 
assumptions but we refrain from going in this direction here.
\end{remark}

As immediate corollaries of Theorems~\ref{main.thm} and \ref{main2.thm}, we have the
following existence results for the problem
\begin{equation}\label{diradf.eq}
\setlength\arraycolsep{0.05cm}
\left\{ \begin{array}{rlll}
-(r^{N-1}\phi_p(u'))' &= & r^{N-1} f(r,u), \quad 0<r<1,\\
u'(0) = u(1) &= &0,
\end{array}\right.
\end{equation}

\begin{corollary}\label{cor1}
Let $p>2$, suppose that (H) holds, and that $1\in(\lam_0,\lam_\infty)$. Then problem
\eqref{diradf.eq} has a unique non-trivial solution $u\in X_p$, such that $u>0$ on 
$[0,1)$.
\end{corollary}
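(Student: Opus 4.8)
The plan is to read the statement off directly from Theorem~\ref{main.thm} by fixing the bifurcation parameter at $\lam=1$. First I would observe that problem \eqref{diradf.eq} is exactly \eqref{dirad.eq} with $\lam=1$: a function $u\in C^1[0,1]$ with $\phi_p(u')\in C^1[0,1]$ solves \eqref{diradf.eq} if and only if the couple $(1,u)$ solves \eqref{dirad.eq}. Consequently the non-trivial solutions of \eqref{diradf.eq} are in bijection with the non-trivial solutions $(\lam,u)$ of \eqref{dirad.eq} having $\lam=1$.

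Next I would invoke Theorem~\ref{main.thm}: since $p>2$ and (H) holds, it supplies a map $u\in C^1((\lam_0,\lam_\infty),Y)$ with $u(\lam)\in X_p$ and $u(\lam)>0$ on $[0,1)$, such that for every $\lam\in(\lam_0,\lam_\infty)$ the couple $(\lam,u(\lam))$ is the \emph{unique} non-trivial solution of \eqref{dirad.eq} at that parameter value. By the hypothesis $1\in(\lam_0,\lam_\infty)$ I may evaluate at $\lam=1$ and set $u:=u(1)\in X_p$; this is a solution of \eqref{diradf.eq} with $u>0$ on $[0,1)$, which gives existence and positivity. For uniqueness, any non-trivial solution $v\in X_p$ of \eqref{diradf.eq} gives a non-trivial solution $(1,v)$ of \eqref{dirad.eq} at $\lam=1$, so $v=u(1)=u$ by the uniqueness part of Theorem~\ref{main.thm}.

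I do not expect any genuine obstacle: this is a direct corollary, and the only substantive point is the admissibility condition $1\in(\lam_0,\lam_\infty)$, which is assumed precisely so that the value $\lam=1$ lies in the interval on which the solution curve of Theorem~\ref{main.thm} is defined. One could add that $\lam_0$ and $\lam_\infty$ are determined by the asymptotes $f_0,f_\infty$ via Lemma~\ref{eigen.lem}, so this is an explicit constraint on $f$ rather than a hidden one.
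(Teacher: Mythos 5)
Your proposal is correct and is exactly the argument the paper has in mind: the corollary is stated as an immediate consequence of Theorem~\ref{main.thm}, obtained by identifying \eqref{diradf.eq} with \eqref{dirad.eq} at $\lam=1$ and evaluating the solution curve there, with uniqueness inherited from the uniqueness statement of the theorem. No gaps.
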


\begin{corollary}\label{cor2}
Let $p>2$, suppose that (H') holds, and that $1\in(\lam_0,\lam_\infty)$. Then problem
\eqref{diradf.eq} has exactly two non-trivial solutions $u_\pm\in X_p$, 
with $\pm u_\pm>0$ on $[0,1)$.
\end{corollary}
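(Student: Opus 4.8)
\textbf{Proof of Corollary~\ref{cor2} (proposal).} The plan is simply to specialise Theorem~\ref{main2.thm} to the value $\lam=1$. Observe first that \eqref{diradf.eq} is exactly \eqref{dirad.eq} with $\lam=1$; that is, a function $u\in X_p$ solves \eqref{diradf.eq} if and only if the pair $(1,u)$ is a solution of \eqref{dirad.eq} in the sense defined in the Introduction. Since (H') is assumed and $1\in(\lam_0,\lam_\infty)$, Theorem~\ref{main2.thm} provides curves $u_\pm\in C^1((\lam_0,\lam_\infty),Y)$ with $u_\pm(\lam)\in X_p$ and $\pm u_\pm(\lam)>0$ on $[0,1)$, such that for every $\lam\in(\lam_0,\lam_\infty)$ the pairs $(\lam,u_+(\lam))$ and $(\lam,u_-(\lam))$ are the only non-trivial solutions of \eqref{dirad.eq}.

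Now set $u_+:=u_+(1)$ and $u_-:=u_-(1)$, which is legitimate because $1\in(\lam_0,\lam_\infty)$. Then $u_\pm\in X_p$ and $\pm u_\pm>0$ on $[0,1)$; in particular neither $u_+$ nor $u_-$ is the trivial solution. Moreover $u_+$ and $u_-$ are distinct, since $u_+>0>u_-$ on $[0,1)$. Thus \eqref{diradf.eq} has at least the two non-trivial solutions $u_+$ and $u_-$. Conversely, if $u$ is any non-trivial solution of \eqref{diradf.eq}, then $(1,u)$ is a non-trivial solution of \eqref{dirad.eq} with $\lam=1\in(\lam_0,\lam_\infty)$, so by the uniqueness statement in Theorem~\ref{main2.thm} we must have $u\in\{u_+(1),u_-(1)\}=\{u_+,u_-\}$. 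Hence \eqref{diradf.eq} has exactly two non-trivial solutions, namely $u_\pm$, with $\pm u_\pm>0$ on $[0,1)$.

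There is essentially no obstacle here beyond invoking Theorem~\ref{main2.thm}: the only points requiring a word are that the two candidate solutions are genuinely non-trivial and mutually distinct, both of which are immediate from the strict sign conditions $\pm u_\pm>0$ on $[0,1)$. (The analogous argument, using Theorem~\ref{main.thm} in place of Theorem~\ref{main2.thm} and dropping the negative branch, proves Corollary~\ref{cor1}.) $\qed$
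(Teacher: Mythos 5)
Your proposal is correct and matches the paper's intent exactly: the paper states Corollary~\ref{cor2} as an immediate consequence of Theorem~\ref{main2.thm} obtained by evaluating the solution curves at $\lam=1$, which is precisely what you do. The small points you spell out (non-triviality and distinctness of $u_+(1)$ and $u_-(1)$ via the strict sign conditions) are the right ones and are handled correctly.
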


\medskip
\noindent{\bf Notation.}
For $h\in C^0([0,1]\x\real)$, we define the Nemitskii mapping $h:Y\to Y$ by
$h(u)(r):=h(r,u(r))$. Then $h:Y\to Y$ is bounded and continuous. We will always use 
the same symbol for a function and for the induced Nemitskii mapping.

When computing estimates, the symbol $C$ will denote positive constants which 
may change from line to line. Their exact values are not essential to the analysis.


\section{The inverse $p$-Laplacian}\label{inverse.sec}

In this section we study an integral operator corresponding to the inverse of
the $p$-Laplacian in the radial setting. Although our main goal in this paper is the
proof of Theorems~\ref{main.thm} and \ref{main2.thm}, which requires $p\ge2$,
the results in this section will be stated in greater generality, for $p>1$.
Let us start by introducing some useful 
notation. For $p>1$, we let
$$
p^*:=\tfrac{1}{p-1} \quad\text{and}\quad p':=\tfrac{p}{p-1}=p^*+1.
$$ 
Then for $\xi,\eta\in\real$, the continuous function $\phi_p:\real\to\real$
satisfies
\begin{equation}\label{phi.eq}
 \phi_p(\xi)=\eta \iff \xi=\phi_{p'}(\eta)=\phi_{p^*+1}(\eta).
\end{equation}

For any $h\in C^0[0,1]$, the problem
\begin{equation}\label{heq.eq}
\setlength\arraycolsep{0.05cm}
\left\{ \begin{array}{rlll}
-(r^{N-1}\phi_p(u'))' &= & r^{N-1} h(r), \quad 0<r<1,\\
u'(0) = u(1) &= &0,
\end{array}\right.
\end{equation}
has a unique solution $u(h) \in X_p$, given by
\begin{equation}\label{formula.eq}
u(h)(r)=\int_r^1 \phi_{p'} \left( \int_0^s \left( \frac{t}{s} \right)^{N-1} h(t) \diff t \right) \diff s.
\end{equation}
The formula \eqref{formula.eq} defines a mapping 
$$S_p:C^0[0,1] \to C^1[0,1], \quad h\mapsto S_p(h)=u(h),$$ that we shall now study.
It will be convenient to rewrite $S_p$ as 
\begin{equation}\label{comp.eq}
S_p=T_p \circ J = \calI \circ \Phi_{p'} \circ J,
\end{equation}
where we define the following operators:
\begin{eqnarray}
\setlength\arraycolsep{0.05cm}
J: C^0[0,1] \to C^1[0,1],  
&J(h)(s) & := \disp\int_0^s \left( \frac{t}{s} \right)^{N-1} h(t) \diff t; \label{J.eq}\notag \\
\Phi_q: C^0[0,1] \to C^0[0,1],  
&\Phi_q(g)(s) & := \phi_q(g(s)), \quad\text{for any} \ q>1; \label{Phi.eq}\notag \\
\calI:C^0[0,1] \to C^1[0,1],  
&\calI(k)(r) & := \int_r^1 k(s) \diff s;\label{defofI.eq}\notag\\
T_p:C^0[0,1] \to C^1[0,1],
& T_p & :=\calI\circ\Phi_{p'} \quad\text{for any} \ p>1. \notag
\end{eqnarray}

It is clear that $\Phi_q, \ \calI$ and $T_p$ are continuous and bounded, 
for any $q,p>1$, and that $\calI$ is linear. Furthermore, $S_p$ is $p^*$-homogeneous.

\begin{remark} \rm
For $1<q<2$, $\Phi_q$ does not map $C^1[0,1]$ into itself, which causes
trouble in differentiating $S_p$ for $p>2$ (i.e. $p'<2$). Nevertheless, if $g\in C^1[0,1]$
has only simple zeros, then $\Phi_q$ maps a neighbourhood of $g$ in $C^1[0,1]$
continuously into $L^1(0,1)$ (see Lemma~2.1 in \cite{br}).
\end{remark}

The following lemma gives important properties of $J$.

\begin{lemma}\label{J.lem}
\item[(i)] $J: C^0[0,1] \to C^1[0,1]$ is well-defined.
\item[(ii)] $J: C^0[0,1] \to C^1[0,1]$ is a bounded linear operator,
with norm $\Vert J\Vert \le 2$.
\item[(iii)] $J$ is Fr\'echet differentiable on $C^0[0,1]$ with $DJ=J$.
\item[(iv)] $J: C^0[0,1] \to C^0[0,1]$ is compact.
\end{lemma}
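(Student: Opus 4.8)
The plan is to verify the four claims about
$$
J(h)(s)=\int_0^s \Bigl(\tfrac{t}{s}\Bigr)^{N-1} h(t)\diff t
$$
in order, the crux being the regularity at $s=0$, where the factor $s^{1-N}$ is singular when $N\ge2$. First I would rewrite the kernel by pulling out the power of $s$: setting $G(h)(s):=\int_0^s t^{N-1}h(t)\diff t$, we have $J(h)(s)=s^{1-N}G(h)(s)$ for $s\in(0,1]$. Since $h$ is continuous and bounded, $|G(h)(s)|\le |h|_0\, s^{N}/N$, so $J(h)(s)\to0$ as $s\to0^+$; setting $J(h)(0)=0$ we get a function continuous on $[0,1]$. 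This already shows $J$ maps into $C^0[0,1]$ with $|J(h)|_0\le |h|_0/N\le|h|_0$.

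For (i), differentiability on $(0,1]$: by the product and fundamental theorem of calculus,
$$
J(h)'(s)=(1-N)s^{-N}G(h)(s)+s^{1-N}\cdot s^{N-1}h(s)=h(s)-\frac{N-1}{s}J(h)(s),
$$
which is continuous on $(0,1]$. To get $J(h)\in C^1[0,1]$ I would show $J(h)'(s)\to h(0)/N$ as $s\to0^+$, and that $J(h)'$ equals this limit at $s=0$ in the derivative sense. The limit follows from $s^{-N}G(h)(s)\to h(0)/N$ (again by boundedness of $h$ and continuity at $0$, via an $\eps$-argument or L'Hôpital), giving $J(h)'(0^+)=(1-N)\tfrac{h(0)}{N}+h(0)=\tfrac{h(0)}{N}$; combined with $J(h)(0)=0$ and the mean value theorem one checks $J(h)$ is differentiable at $0$ with this value, so $J(h)'\in C^0[0,1]$. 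This establishes (i).

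For (ii), linearity of $J$ is immediate from linearity of the integral. Boundedness: from the formula $J(h)'(s)=h(s)-\tfrac{N-1}{s}J(h)(s)$ and $|J(h)(s)|\le |h|_0 s/N$ we get $|J(h)'(s)|\le |h|_0+\tfrac{(N-1)}{N}|h|_0\le 2|h|_0$, and with $|J(h)|_0\le|h|_0/N\le|h|_0$ this gives $|J(h)|_1=\max\{|J(h)|_0,|J(h)'|_0\}\le 2|h|_0$, i.e. $\Vert J\Vert\le2$. For (iii), since $J$ is bounded linear, it is its own Fréchet derivative: $J(h+k)-J(h)-J(k)=0$, so $DJ=J$ trivially. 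For (iv), compactness of $J:C^0[0,1]\to C^0[0,1]$, I would apply Arzelà–Ascoli to the image of the unit ball: uniform boundedness is (ii), and equicontinuity follows because $|J(h)'|_0\le 2$ on the unit ball makes the family uniformly Lipschitz, hence equicontinuous on $[0,1]$; so $J$ maps bounded sets to relatively compact sets in $C^0[0,1]$.

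The main obstacle is the behaviour at $s=0$ for $N\ge2$: the naive bound on the kernel $(t/s)^{N-1}$ blows up, so one genuinely needs the cancellation coming from integrating $t^{N-1}$ against a continuous $h$, i.e. the estimate $|G(h)(s)|\le |h|_0 s^N/N$ and the refined asymptotics $s^{-N}G(h)(s)\to h(0)/N$. Once this is in hand — essentially a careful application of continuity of $h$ at $0$ together with the fundamental theorem of calculus — all four parts follow by routine arguments. (For $N=1$ everything is elementary since $J(h)(s)=\int_0^s h(t)\diff t$.)
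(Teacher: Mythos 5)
Your proposal is correct and follows essentially the same route as the paper: the bound $|\int_0^s t^{N-1}h(t)\,\mathrm{d}t|\le |h|_0 s^N/N$ and the identity $J(h)'(s)=h(s)-\tfrac{N-1}{s}J(h)(s)$ give (i) and (ii), (iii) is immediate from linearity and boundedness, and (iv) is the compact embedding $C^1[0,1]\hookrightarrow C^0[0,1]$, which you simply unwind via Arzel\`a--Ascoli. Your treatment of the point $s=0$ (showing $J(h)'(s)\to h(0)/N$ and then differentiability at $0$) is in fact slightly more explicit than the paper's L'H\^opital computation, and the norm bound $\Vert J\Vert\le 2$ holds under either sup-type norm convention on $C^1[0,1]$.
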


\begin{proof} (i) Using de l'Hospital's rule, we get 
$$
\lim_{s\to0} J(s)=\tfrac{1}{N-1}\lim_{s\to0}sh(s)=0,
$$ 
and it follows that $J(h)\in C^0[0,1]$ for all $h\in C^0[0,1]$.
Furthermore,
$$
\lim_{s\to0}\frac{J(s)}{s}=\lim_{s\to0}\frac{s^{N-1}h(s)}{Ns^{N-1}}=\frac{h(0)}{N},
$$
and so $J(h)\in C^1[0,1]$, with $J'(0)=h(0)/N$.

(ii) We first have
\begin{equation*}\label{estJ0.eq}
|J(h)|_0 
\le \sup_{0\le s\le 1} \left| \int_0^s \left( \frac{t}{s} \right)^{N-1} \diff t \right| |h|_0
\le  \frac{|h|_0}{N}.
\end{equation*}
Then, since
\begin{align}\label{diffJ.eq}
J(h)'(s) 
&=(1-N)s^{-N}\int_0^s t^{N-1} h(t) \diff t + h(s) \notag \\
&=(1-N)s^{-1}J(h)(s)+h(s) \quad \text{for all} \ s\in(0,1],
\end{align}
we have
\begin{equation*}\label{estJ1.eq}
|J(h)'|_0 
\le  \sup_{0\le s\le 1} \left| (1-N)s^{-N} \int_0^s t^{N-1} \diff t \right| |h|_0 + |h|_0
\le  \frac{2N-1}{N} |h|_0.
\end{equation*}
It follows that $J$ is bounded, with norm
$$\Vert J\Vert:=\sup_{|h|_0=1} |J(h)|_1\le \frac1N + \frac{2N-1}{N}=2.$$

(iii) follows from (ii).

(iv) follows from (ii) and the compact embedding $C^1[0,1] \hookrightarrow C^0[0,1]$.
\end{proof}

We can now state important properties of $S_p$, following from the results above.

\begin{theorem}\label{Spcont.thm}
The mapping $S_p: C^0[0,1] \to C^1[0,1]$ defined by \eqref{comp.eq} is continuous, bounded and compact.
\end{theorem}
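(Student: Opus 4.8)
The plan is to exploit the factorization $S_p = \calI \circ \Phi_{p'} \circ J$ given in \eqref{comp.eq} and simply track the three properties --- continuity, boundedness, compactness --- through the composition. By Lemma~\ref{J.lem}, the first factor $J$ is a bounded linear operator from $C^0[0,1]$ into $C^1[0,1]$, and in particular, composing with the continuous embedding $C^1[0,1]\hookrightarrow C^0[0,1]$, it is a bounded (hence continuous) operator $J:C^0[0,1]\to C^0[0,1]$; moreover part~(iv) of that lemma tells us $J$ is compact as a map into $C^0[0,1]$. For the middle factor, the excerpt has already observed that the Nemitskii-type operator $\Phi_{p'}:C^0[0,1]\to C^0[0,1]$ is continuous and bounded for every $p>1$ (it sends a ball of radius $R$ into the ball of radius $R^{p'-1}=R^{p^*}$). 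Finally $\calI:C^0[0,1]\to C^1[0,1]$ is linear, bounded and continuous, again as noted right after its definition.

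First I would assemble boundedness: if $h$ ranges over a bounded set $M\subset C^0[0,1]$, then $J(M)$ is bounded in $C^0[0,1]$ by Lemma~\ref{J.lem}(ii), hence $\Phi_{p'}(J(M))$ is bounded in $C^0[0,1]$ since $\Phi_{p'}$ is bounded, hence $S_p(M)=\calI(\Phi_{p'}(J(M)))$ is bounded in $C^1[0,1]$ since $\calI$ is bounded. Next, continuity of $S_p:C^0[0,1]\to C^1[0,1]$ follows from the continuity of each of the three factors in the relevant topologies, the composition of continuous maps being continuous; here it matters that $\Phi_{p'}$ need only be continuous $C^0\to C^0$, which it is, so the low regularity of $\phi_{p'}$ for $p>2$ causes no difficulty at this stage (the differentiability issues alluded to in the introduction are deferred to Theorem~\ref{Sp.thm}).

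For compactness I would write $S_p = \calI \circ \Phi_{p'} \circ J$ with $J$ viewed as a \emph{compact} map $C^0[0,1]\to C^0[0,1]$ (Lemma~\ref{J.lem}(iv)), then compose on the left with the continuous map $\calI\circ\Phi_{p'}:C^0[0,1]\to C^1[0,1]$: the composition of a compact operator with a continuous one is compact, so $S_p:C^0[0,1]\to C^1[0,1]$ is compact. Alternatively, one can note that $S_p$ factors through $C^1[0,1]$ by its very formula \eqref{formula.eq} (indeed $S_p=\calI\circ\Phi_{p'}\circ J$ already maps into $C^1[0,1]$ since $\calI$ does), so $S_p:C^0[0,1]\to C^0[0,1]$ is the composition $C^0[0,1]\xrightarrow{S_p}C^1[0,1]\hookrightarrow C^0[0,1]$ with a compact embedding, which is again compact; but the first argument directly gives compactness into $C^1[0,1]$.

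There is no serious obstacle here: the theorem is essentially a bookkeeping consequence of Lemma~\ref{J.lem} together with the elementary properties of $\Phi_{p'}$ and $\calI$ recorded in the text. The only mild point requiring care is to keep straight in which space each factor is continuous/bounded/compact --- in particular that $\Phi_{p'}$ is handled purely as a map $C^0[0,1]\to C^0[0,1]$, so that the failure of $\phi_{p'}$ to preserve $C^1$ when $p>2$ never enters --- and to invoke the correct incarnation of $J$ (the $C^0\to C^1$ one for boundedness and continuity, the $C^0\to C^0$ compact one for compactness). The hard analysis is postponed to the Fréchet differentiability statement, Theorem~\ref{Sp.thm}, not this one.
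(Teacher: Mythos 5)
Your proof is correct and follows exactly the route the paper intends: Theorem~\ref{Spcont.thm} is stated as an immediate consequence of the factorization $S_p=\calI\circ\Phi_{p'}\circ J$ in \eqref{comp.eq}, the elementary continuity/boundedness of $\Phi_{p'}$ and $\calI$, and Lemma~\ref{J.lem} (in particular the compactness of $J$ into $C^0[0,1]$), which is precisely the bookkeeping you carry out. Nothing further is needed.
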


The following result is a simple adaptation of Theorem~3.2 of \cite{br} to the present context. This is the first step towards the differentiability of $S_p$.

\begin{proposition}\label{Tp.prop}
\item[(i)] Suppose $1<p<2$. Then $T_p: C^0[0,1] \to B_p$ is $C^1$, and for all
$g,\bar g \in C^0[0,1]$,
\begin{equation}\label{derivofTp.eq}
DT_p(g)\bar g = p^*\calI (|g|^{p^*-1}\bar g).
\end{equation}
\item[(ii)] Suppose $p>2$ and let $g_0\in C^1[0,1]$ have only simple zeros 
(i.e. $g_0(s)=0 \Rightarrow g_0'(s)\neq0$). Then $T_p: C^1[0,1] \to B_p$ is $C^1$
on a neighbourhood $U_0$ of $g_0$ in $C^1[0,1]$, and \eqref{derivofTp.eq} holds
for all $g\in U_0, \ \bar g \in C^1[0,1]$.
\end{proposition}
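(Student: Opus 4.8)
The plan is to reduce Proposition~\ref{Tp.prop} to the differentiability result for the $p$-Laplacian from \cite{br} by a change of viewpoint, since $T_p = \calI \circ \Phi_{p'}$ and $\calI$ is linear and bounded. For part (i), with $1<p<2$ we have $p' > 2$, so $\Phi_{p'}$ maps $C^0[0,1]$ into $C^1[0,1]$ (since $\phi_{p'}(\xi) = |\xi|^{p'-2}\xi$ is $C^1$ when $p'-1 = p^* > 0$, and indeed $C^1$ on $\real$ precisely when $p' \ge 2$) — wait, more carefully: $\phi_q$ is $C^1$ on $\real$ iff $q \ge 2$. With $p'>2$ this holds strictly, so $\phi_{p'} \in C^1(\real)$ with $\phi_{p'}'(\xi) = (p'-1)|\xi|^{p'-2} = p^*|\xi|^{p^*-1}$ — but $p^* = 1/(p-1) > 1$ here, so $|\xi|^{p^*-1}$ is continuous. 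Thus the Nemitskii map $\Phi_{p'}: C^0[0,1] \to C^0[0,1]$ is $C^1$ with $D\Phi_{p'}(g)\bar g = p^*|g|^{p^*-1}\bar g$ (standard for $C^1$ Nemitskii operators with continuous derivative of at most linear growth — here in fact with the superposition operator acting $C^0 \to C^0$, differentiability is classical). Composing with the bounded linear $\calI: C^0[0,1] \to C^1[0,1] \subset B_p$ (note $B_p = C^1[0,1]$ for $1<p\le2$) gives that $T_p: C^0[0,1] \to B_p$ is $C^1$ with $DT_p(g)\bar g = \calI(D\Phi_{p'}(g)\bar g) = p^*\calI(|g|^{p^*-1}\bar g)$, which is \eqref{derivofTp.eq}.

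For part (ii), with $p>2$ we have $1 < p' < 2$, so $\phi_{p'}$ is no longer $C^1$ at $0$ and the above breaks down; this is where one invokes the machinery of \cite{br}. By Lemma~2.1 of \cite{br} (quoted in the Remark above), if $g_0 \in C^1[0,1]$ has only simple zeros, then $\Phi_{p'}$ maps a neighbourhood $U_0$ of $g_0$ in $C^1[0,1]$ continuously into $L^1(0,1)$; and by Theorem~3.2 of \cite{br}, $\Phi_{p'}: U_0 \to L^1(0,1)$ is in fact $C^1$ with derivative $D\Phi_{p'}(g)\bar g = p^*|g|^{p^*-1}\bar g$ (the point being that although $|g|^{p^*-1}$ blows up at the zeros of $g$, since $p^*-1 = (2-p)/(p-1) \in (-1,0)$ and $g$ vanishes to first order near a simple zero, the product $|g|^{p^*-1}\bar g$ stays in $L^1$, and the difference quotients converge in $L^1$-norm — this is precisely the content of \cite[Thm.~3.2]{br}). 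Then $\calI: L^1(0,1) \to C^1[0,1]$ is still bounded and linear (indeed $\calI(k)(r) = \int_r^1 k$ is absolutely continuous with derivative $-k \in L^1$, so lands in $W^{1,1}(0,1) = B_p$; one should state this slightly more carefully and land in $B_p$ rather than $C^1[0,1]$). Composing gives that $T_p = \calI \circ \Phi_{p'}: C^1[0,1] \to B_p$ is $C^1$ on $U_0$ with $DT_p(g)\bar g = p^*\calI(|g|^{p^*-1}\bar g)$ for $g \in U_0$, $\bar g \in C^1[0,1]$, which is again \eqref{derivofTp.eq}.

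The main obstacle is part (ii): the non-differentiability of $\phi_{p'}$ at the origin when $p'<2$, and the consequent need to work in the weaker target space $B_p = W^{1,1}(0,1)$ and to track exactly why the difference quotients $\big(\phi_{p'}(g+t\bar g) - \phi_{p'}(g)\big)/t$ converge to $p^*|g|^{p^*-1}\bar g$ in $L^1$ and not merely pointwise. This hinges entirely on the simple-zero hypothesis on $g_0$ (which, near such a zero, gives $|g(s)| \gtrsim |s - s_0|$ and hence $|g|^{p^*-1} \in L^1$ locally since $p^*-1 > -1$) together with a dominated-convergence argument, and it is exactly the content of \cite[Lem.~2.1, Thm.~3.2]{br}; our task is the routine but careful bookkeeping of transferring that result through the explicit integral operators $J$, $\Phi_{p'}$, $\calI$ appearing in the decomposition \eqref{comp.eq} of $S_p$. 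The remaining pieces — linearity and boundedness of $\calI$ into the appropriate space, and the chain rule for the composition — are standard.
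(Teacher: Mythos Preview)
Your approach is correct and is essentially the paper's own: the paper states without proof that Proposition~\ref{Tp.prop} is ``a simple adaptation of Theorem~3.2 of \cite{br}'', and your sketch unpacks exactly this adaptation via the decomposition $T_p=\calI\circ\Phi_{p'}$, invoking \cite[Lemma~2.1 and Theorem~3.2]{br} for the behaviour of $\Phi_{p'}$ near a $g_0$ with simple zeros and then composing with the bounded linear $\calI$ into $B_p$. One small bibliographic remark: in \cite{br}, Theorem~3.2 already concerns (an analogue of) $T_p$ itself rather than $\Phi_{p'}$ alone, so strictly speaking you are reproving that theorem in the present setting rather than merely citing it for $\Phi_{p'}$; but the substance of the argument --- simple zeros yield $|g|^{p^*-1}\in L^1$, dominated convergence gives $L^1$-Fr\'echet differentiability of $\Phi_{p'}$, and $\calI:L^1(0,1)\to W^{1,1}(0,1)$ is bounded linear --- is the same, and your write-up is a faithful elaboration of what the paper leaves implicit.
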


We are now able to state the main result of this section, about the differentiability of
$S_p$. The statement and the proof of this result are very similar to those of
Theorem~3.4 in \cite{br}.

\begin{theorem}\label{Sp.thm}
\item[(i)] Suppose $1<p<2$. Then $S_p: C^0[0,1] \to B_p$ is $C^1$, and for all
$h,\bar h \in C^0[0,1]$,
\begin{equation}\label{DSp.eq}
DS_p(h)\bar h = p^*\calI (|u(h)'|^{2-p}J(\bar h)),
\end{equation}
where $u(h)=S_p(h)$. Furthermore,
\begin{eqnarray}\label{lineariz.eq}
v=DS_p(h)\bar h \ \Longrightarrow \ v \in B_p \ \text{and} \ 
\setlength\arraycolsep{0.05cm}
\left\{ \begin{array}{rlll}
-(r^{N-1}|u(h)'(r)|^{p-2}v'(r))' &=& p^*r^{N-1}\bar h(r),\\
v'(0) = v(1) &=& 0,
\end{array}\right.
\end{eqnarray}
\item[(ii)] Suppose $p>2$ and let $h_0\in C^0[0,1]$ be such that 
$u(h_0)'(r)=0 \Rightarrow h_0(r)\neq0$. Then there exists a neighbourhood $V_0$
of $h_0$ in $C^0[0,1]$ such that the mapping $h\mapsto |u(h)'|^{2-p}: V_0 \to L^1(0,1)$
is continuous, $S_p:V_0 \to B_p$ is $C^1$, and $DS_p$ satisfies 
\eqref{DSp.eq} and \eqref{lineariz.eq}, for all $h\in V_0, \ \bar h \in C^0[0,1]$. 
\end{theorem}

\begin{proof} (i) In view of \eqref{comp.eq}, 
the differentiability of $S_p: C^0[0,1] \to B_p$ follows from
Lemma~\ref{J.lem}(iii) and Proposition~\ref{Tp.prop}(i). Then, for $h,\bar h \in C^0[0,1]$,
$$
DS_p(h)\bar h = DT_p(J(h))J(\bar h) = p^*\calI(|J(h)|^{p^*-1}J(\bar h)).
$$
Now letting $u(h)=S_p(h)$ and differentiating \eqref{formula.eq} yields
$$
u(h)'=\phi_{p'}(J(h))=\phi_{p^*+1}(J(h)) \ \Longrightarrow \
|u(h)'|^{2-p}=|J(h)|^{p^*-1},
$$
proving \eqref{DSp.eq}, from which the continuity of $DS_p$ follows. We will prove below
that \eqref{lineariz.eq} holds in both cases (i) and (ii).

(ii) The case $p>2$ is more delicate and uses Proposition~\ref{Tp.prop}(ii). We define
$g_0:=J(h_0)\in C^1[0,1]$ and $u_0:=u(h_0)$. Then 
$$
\phi_p(u_0'(r))=g_0(r) \quad\text{and}\quad (r^{N-1}\phi_p(u_0'(r)))'=r^{N-1}h_0(r),
\quad 0\le r\le 1.
$$
We will show that $g_0$ has only simple zeros.
First remark that
$$
g_0(r)=0 \  \Longrightarrow \ \phi_p(u_0'(r))=0 \ \Longrightarrow \ u_0'(r)=0
\  \Longrightarrow \ h_0(r)\neq 0,
$$
by our hypothesis. But now by \eqref{diffJ.eq},
\begin{equation}\label{dg0.eq}
g_0'(r)=(1-N)r^{-1}g_0(r)+h_0(r), \quad 0\le r\le 1.
\end{equation}
Therefore, if $g_0(r)=0$ with $r>0$, then $g_0'(r)=h_0(r)\neq0$. On the other hand,
if $g_0(0)=0$, it follows from \eqref{dg0.eq} that $Ng_0'(0)=h_0(0)\neq0$.
Hence, $g_0$ has only simple zeros. Apart from our statement  \eqref{lineariz.eq} 
which is slightly more precise than its analogue in \cite{br}, 
the proof then follows that of \cite[Theorem~3.4]{br}, using 
Proposition~\ref{Tp.prop}(ii) and the analogue of 
\cite[Lemma~2.1]{br} for the present setting.

To prove statement \eqref{lineariz.eq}, let $v=DS_p(h)\bar h, \ h,\bar h \in C^0[0,1]$.
Then, from \eqref{DSp.eq},
$$
v(r)=p^*\int_r^1|u(h)'(s)|^{2-p}\int_0^s\left( \frac{t}{s} \right)^{N-1}\bar h(t) \diff t \diff s,
\quad r\in[0,1].
$$
Since $|u(h)'|^{2-p} \in L^1(0,1)$ in both cases (i) and (ii), it follows that $v(1)=0$.
Furthermore,
\begin{equation}\label{flat}
v'(r)=-p^*|u(h)'(r)|^{2-p}\int_0^r\left( \frac{t}{r} \right)^{N-1}\bar h(t) \diff t, \quad r\in[0,1] ,
\end{equation}
from which the equation in \eqref{lineariz.eq} easily follows. But \eqref{flat} also implies
$$
|v'(r)|\le C_1 |u(h)'(r)|^{2-p}r, \quad r\in[0,1].
$$
Now since $u(h)'(r)=-\phi_{p'}(\int_0^r(t/r)^{N-1} h(t) \diff t)$, it follows that
$|u(h)'(r)|\le C_2 r^{p'-1}$, so that
$$
|v'(r)|\le C r^{(p'-1)(2-p)+1} = C r^{p^*}, \quad r \in [0,1],
$$
showing that $v'(0)=0$ and finishing the proof.
\end{proof}

\begin{remark}\rm
Note that Theorem~\ref{Sp.thm} reduces to well-known results for $p=2$.
\end{remark}


\section{Properties of solutions}\label{prop.sec}

In this section we discuss some {\it a priori} properties of solutions. 
We first study the sign of solutions and then
we determine their behaviour as $|u|_0\to0/\infty$.

By the results of
Section~\ref{inverse.sec}, $(\lam,u) \in [0,\infty)\x X_p$ is a solution of 
\eqref{dirad.eq} if and only if
\begin{equation}\label{dirop.eq}
F(\lam,u) := u - \lam^{p^*} S_p(f(u)) = 0, \quad (\lam,u)\in [0,\infty)\x Y.
\end{equation}
Note that $F:[0,\infty)\x Y\to Y$ is continuous. Furthermore, $F(0,u)=0 \implies u=0$, so we will only consider solutions in
$$
\calS := \{(\lam,u) \in (0,\infty) \x Y: 
(\lam,u) \ \text{is a solution of} \ \eqref{dirop.eq} \ \text{with} \ u\not\equiv0 \}.
$$

\subsection{The case where (H) holds}

We start with the positivity of solutions.

\begin{proposition}\label{pos.prop}
Let $(\lam,u) \in \calS$. 
Then $u>0$ on $[0,1)$, $u$ is decreasing and satisfies $u'(1)<0$.
\end{proposition}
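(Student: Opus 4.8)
The plan is to use the integral representation \eqref{formula.eq} for $u=S_p(f(u))$ together with hypothesis (H2). Since $(\lam,u)\in\calS$, we have $\lam>0$, $u\not\equiv0$, and
$$
u(r)=\lam^{p^*}\int_r^1 \phi_{p'}\!\left( \int_0^s \left(\tfrac{t}{s}\right)^{N-1} f(t,u(t)) \diff t \right) \diff s, \quad r\in[0,1].
$$
First I would argue that $u$ does not change sign. Suppose, for contradiction, that $u(r_0)<0$ for some $r_0\in[0,1)$. The key observation is that, by (H2), $f(t,\xi)\ge0$ whenever $\xi\ge0$ and $f(t,\xi)\le0$ whenever $\xi\le0$, with strict inequality off the zero set of $\xi$; more precisely $f(t,u(t))$ has the same sign as $u(t)$ pointwise. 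Since $\phi_{p'}$ is strictly increasing and odd, the inner integral $\int_0^s (t/s)^{N-1} f(t,u(t))\diff t$ controls the sign of $u'(s)=-\phi_{p'}(\cdots)$. The cleanest route is: because $u(1)=0$ and $u$ is continuous, if $u$ is somewhere negative then there is a subinterval where it is negative, and on the maximal such subinterval one derives a contradiction with the boundary/initial conditions by tracking the sign of $u'$ — this is the standard maximum-principle argument for the radial $p$-Laplacian. Alternatively, and perhaps more robustly, I would first show $u\ge0$ everywhere as follows: if $\min u<0$, it is attained at an interior point or at $r=0$ (not at $r=1$ since $u(1)=0$); at such a point $u'=0$, and integrating $-(r^{N-1}\phi_p(u'))'=\lam r^{N-1}f(r,u)$ outward from that point, the right-hand side is $\le 0$ on a neighbourhood where $u<0$, forcing $r^{N-1}\phi_p(u'(r))\ge 0$, i.e. $u'\ge0$, so $u$ cannot decrease below its purported minimum — contradiction unless $u\equiv0$ near there, and a connectedness argument propagates $u\ge0$ to all of $[0,1]$.

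Once $u\ge0$ on $[0,1]$ with $u\not\equiv0$, I would upgrade to strict positivity on $[0,1)$. From $u\ge0$ and (H2), $f(t,u(t))\ge0$ for all $t$, and $f(t,u(t))>0$ precisely where $u(t)>0$. Since $u\not\equiv0$, there is an open set where $u>0$, hence $\int_0^s(t/s)^{N-1}f(t,u(t))\diff t>0$ for all $s$ past the first point where $u$ becomes positive, and in fact (since $u$ is continuous and $u(1)=0$ would contradict $u$ being positive only near $0$... rather) the inner integral is nonnegative for every $s$ and strictly positive for all $s$ in a set of full measure near any point where $u>0$; a short argument shows it is strictly positive for all $s\in(0,1]$ once we know $u$ is not identically zero on $[0,s]$. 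Then $u'(s)=-\phi_{p'}(\text{strictly positive})<0$ for those $s$, so $u$ is strictly decreasing there; combined with $u(1)=0$ this gives $u(r)>0$ for all $r\in[0,1)$. Strict monotonicity of $u$ on all of $(0,1)$ then follows because the inner integral $\int_0^s(t/s)^{N-1}f(t,u(t))\diff t$ is strictly positive for every $s\in(0,1]$ — its integrand is $\ge0$ and positive on a set of positive measure (namely where $u>0$, which by the previous step is all of $(0,s)$ for $s$ near $1$, and one checks it is all of $(0,1)$). Hence $u'(s)<0$ for all $s\in(0,1)$, i.e. $u$ is decreasing; and evaluating \eqref{flat}-type formula for $u'$ at $r=1$, $u'(1)=-\phi_{p'}\big(\int_0^1 t^{N-1} f(t,u(t))\diff t\big)<0$ since the integral is strictly positive.

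The main obstacle I anticipate is the bookkeeping at the endpoints, particularly handling the point $r=0$ (where $u'(0)=0$ is imposed and where the weight $(t/s)^{N-1}$ degenerates for $N>1$) and making the propagation of positivity rigorous — i.e. going from "$u>0$ on some subinterval" to "$u>0$ on all of $[0,1)$" without appealing to a maximum principle that may not be available in the quasilinear radial setting in the form one wants. I would handle this by working directly with the explicit formula \eqref{formula.eq}: the representation makes the sign of $u$ and of $u'$ completely transparent in terms of the sign of the inner integral of $f(t,u(t))$, so the argument reduces to elementary real analysis about integrals of sign-definite continuous functions, avoiding any appeal to a PDE maximum principle. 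The one genuinely delicate point is ruling out that $u$ touches zero at some interior point $r_1\in(0,1)$ from above; there one uses that $u(r_1)=0$ and $u\ge0$ force $u'(r_1)=0$, and then the integral identity $r_1^{N-1}\phi_p(u'(r_1)) = \lam\int_0^{r_1} t^{N-1}f(t,u(t))\diff t$ forces $\int_0^{r_1} t^{N-1} f(t,u(t))\diff t = 0$, whence $f(t,u(t))\equiv0$ on $(0,r_1)$, whence $u\equiv0$ on $(0,r_1)$ by (H2), and then unique solvability of the initial-value problem (or just re-running the representation) forces $u\equiv0$, a contradiction.
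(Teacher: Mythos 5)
There is a genuine gap, and it comes from a misreading of the hypothesis. The proposition lives in the case where (H) holds, and (H2) states that $f(r,\xi)>0$ for \emph{all} $\xi\neq0$, negative as well as positive. Your first paragraph instead asserts that ``$f(t,u(t))$ has the same sign as $u(t)$ pointwise'' --- that is the sign condition (H2'), not (H2). The whole negativity-exclusion argument is built on this wrong premise, and it could not work as written: under (H2') negative solutions genuinely exist (this is exactly the content of Proposition~\ref{pos'.prop} and Theorem~\ref{main2.thm}), so no argument can rule them out for arbitrary $(\lam,u)\in\calS$ from the sign condition alone. Indeed, even internally, the step ``integrating outward from the minimum forces $u'\ge0$, so $u$ cannot decrease below its minimum --- contradiction'' is not a contradiction: increasing from an interior negative minimum up to $u(1)=0$ is precisely the profile of a negative solution, so nothing is contradicted.

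Under the actual (H2) the difficulty you spend most of your effort on evaporates, and this is how the paper argues: since $f(t,u(t))\ge0$ for every $t$ \emph{regardless of the sign of $u(t)$}, the representation \eqref{formula.eq} immediately gives $u\ge0$, and since $u\not\equiv0$ is continuous the inner integral is strictly positive for $s$ past the first point where $u\neq0$, whence $u(0)>0$; moreover
$\phi_p(u'(r))=-\lam\int_0^r (t/r)^{N-1}f(t,u(t))\diff t\le0$, so $u$ is decreasing, and
$\phi_p(u'(1))=-\lam\int_0^1 t^{N-1}f(t,u(t))\diff t<0$, giving $u'(1)<0$ and hence $u>0$ on $[0,1)$. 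No maximum principle, no analysis at an interior minimum, and no appeal to uniqueness for the initial-value problem (which is itself delicate for the degenerate $p$-Laplacian and should not be invoked casually) is needed. Your second and third paragraphs, granted $u\ge0$, are essentially this argument in a more roundabout form; the fix is to delete the first paragraph and start from the observation that (H2) makes $f(t,u(t))$ nonnegative unconditionally.
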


\begin{proof} Equation \eqref{dirop.eq} yields
$$
u(r)=\lam^{p^*}\int_r^1\phi_{p'}\left(\int_0^s\left( \frac{t}{s} \right)^{N-1} f(t,u(t)) \diff t\right) \diff s.
$$
Since $u\not\equiv0$ is continuous, it follows from (H2) that $u(0)>0$.
Furthermore,
$$
\phi_p(u'(r)) = -\lam \int_0^r\left( \frac{t}{r} \right)^{N-1} f(t,u(t)) \diff t \le 0,
\quad r \in [0,1],
$$
showing that $u'(r) \le 0$ for all $r\in[0,1]$, so $u$ is decreasing on $[0,1]$.
Finally, 
$$
\phi_p(u'(1)) = -\lam \int_0^1 t^{N-1} f(t,u(t)) \diff t < 0.
$$
This implies $u'(1)<0$, from which $u>0$ on $[0,1)$ now follows.
\end{proof}

For the following results, we will use the function $g:[0,1]\x\real\to\real$ defined by
\begin{equation}\label{gdef}
g(r,\xi) := 
\begin{cases} f(r,\xi)/\phi_p(\xi) , \quad \xi\neq0,\\
f_0(r), \quad \xi=0.
\end{cases}
\end{equation}
It follows from our assumptions that $g \in C^0([0,1]\x\real)$ and $g$ satisfies
\begin{equation}\label{boundsforg}
0\le f_\infty(r) \le g(r,\xi) \le f_0(r), \quad (r,\xi) \in [0,1]\x\real.
\end{equation}

\begin{lemma}\label{asympt.lem}
Consider a sequence $\{(\lam_n,u_n)\} \subset \calS$. 
Suppose that $|u_n|_0\to0/\infty$ as $n\to\infty$. Then $\lam_n\to\lam_{0/\infty}$.
\end{lemma}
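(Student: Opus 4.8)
The plan is to use the operator equation \eqref{dirop.eq} together with the two-sided bound \eqref{boundsforg} on $g$, and the simplicity of the first eigenvalues $\lam_0$, $\lam_\infty$ from Lemma~\ref{eigen.lem}. First I would rewrite \eqref{dirop.eq} in a ``linearised'' form: since $f(t,u_n(t)) = g(t,u_n(t))\phi_p(u_n(t))$, a solution $(\lam_n,u_n)\in\calS$ satisfies
$$
u_n = \lam_n^{p^*} S_p\bigl(g(u_n)\phi_p(u_n)\bigr).
$$
Normalising by setting $w_n := u_n/|u_n|_0$, and using that $S_p$ is $p^*$-homogeneous while $\phi_p$ is $(p-1)$-homogeneous (so that $\phi_p(u_n) = |u_n|_0^{p-1}\phi_p(w_n)$ and hence $\lam_n^{p^*} S_p(g(u_n)\phi_p(u_n)) = \lam_n^{p^*}|u_n|_0\, S_p(g(u_n)\phi_p(w_n))$), this becomes
$$
w_n = \lam_n^{p^*}\, S_p\bigl(g(u_n)\phi_p(w_n)\bigr), \qquad |w_n|_0 = 1.
$$

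Next I would extract convergent subsequences. The sequence $g(u_n)\phi_p(w_n)$ is bounded in $Y$ by \eqref{boundsforg} (since $0\le g\le f_0$ and $|w_n|_0=1$), so by the compactness of $S_p$ (Theorem~\ref{Spcont.thm}) the right-hand side lies in a relatively compact subset of $C^1[0,1]$, hence of $Y$. Therefore $\{w_n\}$ is relatively compact in $Y$; passing to a subsequence, $w_n\to w$ in $Y$ with $|w|_0=1$. The $\lam_n$ I would first show are bounded: testing the equation or using the bound $|w_n|_0=1$ against the lower eigenvalue estimate forces $\lam_n$ to stay in a compact subinterval of $(0,\infty)$ — essentially because $S_p$ composed with multiplication by a strictly positive weight bounded below by $f_\infty$ (in case (H6)(a)) has a bounded inverse on the relevant cone, while the upper bound $f_0$ keeps $\lam_n$ away from $0$. (In case (H6)(b), $N=1$ and $f_\infty\equiv0$, one only needs boundedness from below and the $|u_n|_0\to\infty$ case is handled separately, showing $\lam_n\to\infty=\lam_\infty$.) So, along a further subsequence, $\lam_n\to\lam_*\in(0,\infty)$ (or $\lam_n\to\infty$).

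Then I would identify the limit equation. The key point is the pointwise behaviour of $g(u_n(t))$: if $|u_n|_0\to0$, then $u_n\to0$ uniformly, so $g(u_n(t))\to g(t,0)=f_0(t)$ uniformly by continuity of $g$ on $[0,1]\times\real$; if $|u_n|_0\to\infty$, I would use (H5)/(H5') — namely $g(r,\xi)\to f_\infty(r)$ uniformly as $|\xi|\to\infty$ — combined with the fact that, since $u_n = |u_n|_0 w_n$ and (after passing to a subsequence) $w_n\to w$ with $w>0$ on a set of positive measure and $|w|_0=1$, one has $|u_n(t)|\to\infty$ for a.e.\ $t$; the uniform bound $0\le g\le f_0$ then gives $g(u_n)\phi_p(w_n)\to f_\infty\phi_p(w)$ in $Y$ by dominated convergence. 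Passing to the limit in the operator equation, using continuity of $S_p$, yields
$$
w = \lam_*^{p^*}\, S_p\bigl(f_{0/\infty}\,\phi_p(w)\bigr),
$$
i.e.\ $(\lam_*, w)$ solves the homogeneous problem \eqref{eigen} with $|w|_0=1$; and by the sign and monotonicity a priori properties (Proposition~\ref{pos.prop}, applied to $u_n$ and inherited by $w_n\to w$), $w\ge0$, in fact $w>0$ on $[0,1)$. By the uniqueness part of Lemma~\ref{eigen.lem} — $\lam_{0/\infty}$ is the only eigenvalue of \eqref{eigen} admitting a positive eigenfunction — we conclude $\lam_*=\lam_{0/\infty}$. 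Since every subsequence of $\{\lam_n\}$ has a sub-subsequence converging to $\lam_{0/\infty}$, the whole sequence converges: $\lam_n\to\lam_{0/\infty}$.

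The main obstacle I expect is the passage to the limit in the $|u_n|_0\to\infty$ case: controlling $g(u_n(t))$ requires knowing $|u_n(t)|\to\infty$ for almost every $t$, which is not immediate from $|u_n|_0\to\infty$ alone and must be wrung out of the convergence $w_n\to w$ together with the a priori structure of solutions (positivity and monotonicity, so that $w$ cannot vanish on a large set); the uniformity in (H5) is exactly what makes the dominated-convergence argument go through once this pointwise divergence is established. The boundedness of $\{\lam_n\}$ away from $0$ and $\infty$ (in case (H6)(a)) is the other technical point, relying on the strict positivity of $f_\infty$ and $f_0$ on the compact interval $[0,1]$.
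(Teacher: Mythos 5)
Your overall strategy for case (H6)(a) --- normalise $w_n=u_n/|u_n|_0$, use boundedness of $g$ and compactness of $S_p$ to extract $w_n\to w$ with $|w|_0=1$, pass to the limit to see that $(\lam_*,w)$ solves the weighted problem \eqref{eigen} with $w>0$ on $[0,1)$, invoke the uniqueness statement of Lemma~\ref{eigen.lem}, and finish with the subsequence principle --- is exactly the paper's route. But the two points you yourself flag as ``technical'' are where real gaps remain, and one of them is not repairable by what you sketch. For the bound on $\{\lam_n\}$, your justification (``$S_p$ composed with multiplication by a weight bounded below by $f_\infty$ has a bounded inverse on the relevant cone'') is an assertion, not an argument; the paper gets $\lam_0\le\lam_n\le\lam_\infty$ at once from the positivity of $w_n$, the squeeze $f_\infty\le g(u_n)\le f_0$ in \eqref{slforvn}, and the Sturmian comparison theorem of \cite[Sec.~4]{w}. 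More seriously, case (H6)(b) ($N=1$, $f_\infty\equiv0$, $\lam_\infty=\infty$) is essentially untreated in your proposal: (i) the parenthetical claim that when $|u_n|_0\to\infty$ the conclusion $\lam_n\to\infty$ is ``handled separately'' carries no proof --- the paper argues by contradiction, showing that if $\{\lam_n\}$ were bounded then $w_n\to\bar v$ with $|\bar v|_0=1$ while $g(u_n)\phi_p(w_n)\to0$, forcing $\bar v=0$; and (ii) when $|u_n|_0\to0$ you still need $\{\lam_n\}$ bounded above in order to extract a convergent subsequence, and this cannot come from your $f_\infty$-based reasoning since $f_\infty\equiv0$. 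The paper proves it by a specific estimate combining the concavity of $u_n$ (available only because $N=1$), the monotonicity of $\xi\mapsto f(t,\xi)/\xi^{p-1}$ coming from (H3), and $f_0>0$, which yields $\lam_n^{-p^*}\ge\de>0$ for large $n$. Without these two steps the lemma is only proved under (H6)(a).

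A smaller point: dominated convergence gives $g(u_n)\phi_p(w_n)\to f_\infty\phi_p(w)$ in $L^1(0,1)$, not in $Y=C^0[0,1]$ as you claim, and continuity of $S_p$ is stated on $C^0[0,1]$. The step can be repaired --- the convergence is in fact uniform, because $w$ is decreasing with $w(0)=1$, so $u_n\to\infty$ locally uniformly on the interval where $w>0$, while on the set where $w=0$ one has $\phi_p(w_n)\to0$ uniformly and $g$ is bounded; this is the ``standard argument'' the paper delegates to \cite[Lemma~5.4]{d} --- but as written the justification is not correct. Likewise, positivity of the limit $w$ on $[0,1)$ is best obtained by running the argument of Proposition~\ref{pos.prop} on the limit equation itself rather than by ``inheritance'' from $w_n\ge0$, which only gives $w\ge0$.
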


\begin{proof} 
Setting $v_n:=u_n/|u_n|_0$, we have
\begin{equation}\label{eqforvn}
v_n=S_p(\lam_ng(u_n)\phi_p(v_n))
\end{equation}
or, equivalently,
\begin{equation}\label{slforvn}
\setlength\arraycolsep{0.05cm}
\left\{ \begin{array}{rlll}
-(r^{N-1}\phi_p(v_n'))' &= &\lam r^{N-1} g(u_n)\phi_p(v_n), \quad 0<r<1,\\
v_n'(0) = v_n(1) &= &0,
\end{array}\right.
\end{equation}
where $u\mapsto g(u)$ denotes the Nemitskii mapping induced by $g$. 
Since $v_n>0$ in $[0,1)$ for all $n$,
it follows from \eqref{boundsforg}, \eqref{slforvn}, and the Sturmian-type comparison 
theorem in \cite[Sec. 4]{w} that 
\begin{equation}\label{lambounds.eq}
0<\lam_0 \le \lam_n \le \lam_\infty\le\infty.
\end{equation}

Let us first suppose that hypothesis (H6)(a) holds. 
Then $\lam_\infty<\infty$ and we can suppose that
$\lam_n\to\bar\lam \in [\lam_0,\lam_\infty]$ as $n\to\infty$. Now $|v_n|_0=1$
for all $n$, so $\{\lam_ng(u_n)\phi_p(v_n)\}$ is bounded in $C^0[0,1]$ and 
$\{S_p(\lam_ng(u_n)\phi_p(v_n))\}$ is bounded in $C^1[0,1]$.
Therefore, by \eqref{eqforvn}, we can suppose that $|v_n-\bar v|_0 \to 0$ as $n\to\infty$, 
for some $\bar v \in C^0[0,1]$. It then follows by fairly standard arguments 
(see e.g. the proof of Lemma~5.4 in \cite{d}) that 
$$
g(u_n)\phi_p(v_n) \to  
f_{0/\infty}\phi_p(\bar v) \quad \text{provided} \ |u_n|_0\to0/\infty.
$$
Hence $\bar v$ satisfies $\bar v=S_p(\bar\lam f_{0/\infty}\phi_p(\bar v))$ if 
$|u_n|_0\to0/\infty$, that is,
\begin{equation}\label{slforvbar}
\setlength\arraycolsep{0.05cm}
\left\{ \begin{array}{rlll}
-(r^{N-1}\phi_p(\bar v'))' &= & \bar\lam r^{N-1} f_{0/\infty}\phi_p(\bar v), \quad 0<r<1,\\
\bar v'(0) = \bar v(1) &= &0.
\end{array}\right.
\end{equation}
Now the proof of 
Proposition~\ref{pos.prop} shows that $\bar v>0$ in $[0,1)$ and it follows from the properties
of the eigenvalue problem \eqref{slforvbar} (see \cite[Sec.~5]{w}) that 
$\bar\lam=\lam_{0/\infty}$.

We next suppose that hypothesis (H6)(b) holds, i.e. $N=1$ and
$f_\infty\equiv0$. We first prove that
$\lam_n\to\infty$ if $|u_n|_0\to\infty$. Indeed, if we suppose instead that $\{\lam_n\}$ is 
bounded, then the above argument yields a $\bar v \in C^0[0,1]$ such that 
$v_n \to \bar v$ in $C^0[0,1]$ (up to a subsequence), and it follows that
$g(u_n)\phi_p(v_n)\to0$ in $C^0[0,1]$. Then \eqref{eqforvn} implies $\bar v=0$, 
contradicting $|\bar v|_0=1$.

Regarding the behaviour as $|u_n|_0\to0$, the argument for the case $f_\infty>0$ 
will hold in exactly the same way for $f_\infty\equiv0$ if we can show that $\{\lam_n\}$
is bounded. It follows from \eqref{eqforvn} that
$$
1 =|v_n|_0=v_n(0)= 
\lam_n^{p^*} \int_0^1 \phi_{p'} \left( \int_0^s g(u_n)\phi_p(v_n)\diff t\right) \diff s.
$$
Since $u_n, v_n$ are decreasing on $[0,1]$ and, for any fixed $t\in[0,1]$, 
the mapping $\xi \mapsto f(t,\xi)/\xi^{p-1}$ 
is decreasing on $(0,\infty)$, we have
\begin{align*}
\lam_n^{-p^*} & =\int_0^1 \phi_{p'} \left( \int_0^s g(u_n)\phi_p(v_n)\diff t\right) \diff s \\
					& \ge \int_0^{1/2} \phi_{p'} \left( \int_0^s
							\phi_p(v_n(1/2))\frac{f(t,u_n(0))}{u_n(0)^{p-1}}  \diff t\right) \diff s \\
	& \ge Cv_n(1/2)\min_{0\le t \le\frac12}\left(\frac{f(t,u_n(0))}{u_n(0)^{p-1}}\right)^{p^*}.
\end{align*}
Since $N=1$, it follows from \eqref{dirad.eq} that $u$ is concave for all 
$(\lam,u)\in\calS$. Hence, there is a constant $M>0$ (independent of $n$) such that
$v_n(1/2)\ge M|v_n|_0=M$ for all $n$. 
Furthermore, 
$f(t,u_n(0))/u_n(0)^{p-1}\to f_0(t)>0$ uniformly for $t\in[0,\frac12]$ and so there 
exists $\de>0$ such that $\lam_n^{-p^*}\ge\de$ for $n$ large enough.
Therefore, $\{\lam_n\}$ is bounded.

Note that the arguments above only show that $\lam_{n_k}\to\lam_{0/\infty}$
for a subsequence $\{\lam_{n_k}\}$.
Since they can be applied to any subsequence of $\{\lam_n\}$,
it follows that the whole sequence must converge. This concludes the proof.
\end{proof}

\begin{remark}\label{lambounds.rem}\rm
The proof of \eqref{lambounds.eq} shows that 
$\lam_0\le\lam\le\lam_\infty$ for all $(\lam,u)\in\calS$.
\end{remark}

\subsection{The case where (H') holds}

In this case we consider solutions in the sets
$$
\calS^\pm := \{(\lam,u) \in (0,\infty) \x Y: 
(\lam,u) \ \text{is a solution of} \ \eqref{dirop.eq} \ \text{with} \ 
u\not\equiv0, \ \text{and} \  \pm u \ge0 \}.
$$
The following result can be proved as Proposition~\ref{pos.prop}, using (H2') instead
of (H2).
\begin{proposition}\label{pos'.prop}
Let $(\lam,u) \in \calS^\pm$. 
Then $\pm u>0$ on $[0,1)$, $\pm u$ is decreasing and satisfies $\pm u'(1)<0$.
\end{proposition}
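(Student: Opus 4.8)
The plan is to imitate the proof of Proposition~\ref{pos.prop}, replacing the pointwise positivity hypothesis (H2) by the sign condition (H2'). Fix $(\lam,u)\in\calS^+$, so $u\ge0$, $u\not\equiv0$, and $(\lam,u)$ solves \eqref{dirop.eq}; the argument for $\calS^-$ is symmetric (or follows by applying the $\calS^+$ case to $-u$ together with the reflected nonlinearity). Writing out \eqref{dirop.eq} as in the proof of Proposition~\ref{pos.prop} gives
$$
u(r)=\lam^{p^*}\int_r^1\phi_{p'}\left(\int_0^s\left(\frac{t}{s}\right)^{N-1} f(t,u(t))\diff t\right)\diff s,
$$
and, since $\phi_p(u'(r))=-\lam\int_0^r (t/r)^{N-1} f(t,u(t))\diff t$, I would first observe that by (H2') we have $f(t,u(t))=f(t,u(t))u(t)/u(t)\ge0$ wherever $u(t)>0$, and $f(t,u(t))=0$ wherever $u(t)=0$; hence $f(t,u(t))\ge0$ for all $t\in[0,1]$. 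This yields $\phi_p(u'(r))\le0$, so $u'\le0$ and $u$ is decreasing on $[0,1]$. Consequently $u(0)=|u|_0>0$ (as $u\not\equiv0$ and $u$ is decreasing and continuous).

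Next I would show the inequality is strict at $r=1$. We have
$$
\phi_p(u'(1))=-\lam\int_0^1 t^{N-1} f(t,u(t))\diff t,
$$
and I must argue this integral is strictly positive. Since $u$ is decreasing and $u(0)>0$, there is some maximal $r_0\in(0,1]$ with $u>0$ on $[0,r_0)$; then for $t\in(0,r_0)$, (H2') gives $f(t,u(t))>0$, so the integrand is positive on a set of positive measure and the integral is strictly positive. Hence $\phi_p(u'(1))<0$, so $u'(1)<0$. From $u'(1)<0$ and $u(1)=0$ it follows that $u>0$ on a left-neighbourhood of $1$; combined with $u$ decreasing, this forces $u>0$ on all of $[0,1)$ (if $u(r_1)=0$ for some $r_1<1$ then $u\equiv0$ on $[r_1,1]$ by monotonicity and nonnegativity, contradicting $u'(1)<0$). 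This establishes $\pm u>0$ on $[0,1)$, $\pm u$ decreasing, and $\pm u'(1)<0$ in the $\calS^+$ case, and the $\calS^-$ case is identical after the obvious sign changes.

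I do not anticipate a serious obstacle here, since the whole point is that (H2') is exactly strong enough to run the Proposition~\ref{pos.prop} argument verbatim. The one spot that needs a word of care — and which I would state explicitly rather than leave implicit — is passing from the sign condition $f(r,\xi)\xi>0$ back to $f(r,u(r))\ge0$ along a solution that is merely \emph{nonnegative} rather than strictly positive: one must handle the zero set of $u$ separately using $f(r,0)\equiv0$, which is precisely why the definition of $\calS^\pm$ builds in the sign constraint $\pm u\ge0$. Everything else (the de l'Hospital boundary behaviour at $r=0$, membership of the integral operator in $X_p$) is already supplied by Section~\ref{inverse.sec} and needs no repetition.
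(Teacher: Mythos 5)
Your proposal is correct and is exactly the argument the paper intends: the paper's own ``proof'' of Proposition~\ref{pos'.prop} is a one-line remark that it follows as Proposition~\ref{pos.prop} with (H2') in place of (H2), and you have simply carried that out, correctly isolating the one point that needs care (deducing $f(t,u(t))\ge 0$ from the sign condition on the zero set of $u$, which is where the constraint $\pm u\ge 0$ built into $\calS^\pm$ is used).
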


Regarding the asymptotic behaviour, we have
\begin{lemma}\label{asympt'.lem}
Consider a sequence $\{(\lam_n,u_n)\} \subset \calS^\pm$. 
Suppose that $|u_n|_0\to0/\infty$ as $n\to\infty$. Then $\lam_n\to\lam_{0/\infty}$.
\end{lemma}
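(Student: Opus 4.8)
The plan is to reduce Lemma~\ref{asympt'.lem} to Lemma~\ref{asympt.lem} by a sign-reflection trick, handling the two cones $\calS^+$ and $\calS^-$ separately. For $\calS^+$ nothing new is needed: a sequence $\{(\lam_n,u_n)\}\subset\calS^+$ consists of solutions with $u_n>0$ on $[0,1)$ (by Proposition~\ref{pos'.prop}), and for positive solutions only the values $f(r,\xi)$ with $\xi\ge0$ are ever used, so the argument of Lemma~\ref{asympt.lem} applies verbatim, yielding $\lam_n\to\lam_{0/\infty}$ when $|u_n|_0\to0/\infty$. For $\calS^-$, I would observe that if $(\lam,u)\in\calS^-$ then setting $\wt u:=-u$ gives a solution of the problem associated with the reflected nonlinearity $\wt f(r,\xi):=-f(r,-\xi)$; indeed $\phi_p$ is odd and $S_p$ is $p^*$-homogeneous with odd structure, so \eqref{dirop.eq} is preserved under the simultaneous change $u\mapsto-u$, $f\mapsto\wt f$. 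By (H2') we have $\wt f(r,\xi)>0$ for $\xi>0$, and one checks that (H1)--(H6) for $\wt f$ follow from (H1), (H2')--(H5'), (H6) for $f$: in particular (H3') is precisely the statement that the reflected $\wt f$ satisfies (H3), and the asymptotes of $\wt f$ as $\xi\to0^+/\infty$ are again $f_0$ and $f_\infty$ (because $f(r,\xi)/\phi_p(\xi)$ is even-in-sign in the relevant sense). Hence the eigenvalues $\lam_0,\lam_\infty$ are the same for $\wt f$, and applying Lemma~\ref{asympt.lem} to the sequence $\{(\lam_n,-u_n)\}\subset\wt\calS$ gives $\lam_n\to\lam_{0/\infty}$.

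Concretely, the steps are: first, record the reflection identity $-S_p(f(u))=S_p(\wt f(-u))$ following from oddness of $\phi_{p'}$ in \eqref{formula.eq}, so that $F(\lam,u)=0$ for $f$ is equivalent to $\wt F(\lam,-u)=0$ for $\wt f$; second, verify that $\wt f$ inherits hypotheses (H) with the \emph{same} $f_0$, $f_\infty$, and hence the same $\lam_0$, $\lam_\infty$ --- this is where (H3') earns its keep; third, note Proposition~\ref{pos'.prop} tells us $-u_n>0$ on $[0,1)$ so $\{(\lam_n,-u_n)\}\subset\wt\calS$, and $|-u_n|_0=|u_n|_0$; fourth, invoke Lemma~\ref{asympt.lem} for $\wt f$ to conclude. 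The $\calS^+$ case needs only the first and last steps with $\wt f$ replaced by $f$ itself.

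The main obstacle --- really the only place requiring care --- is the bookkeeping in the second step: one must confirm that each of (H1), (H4'), (H5') for $f$ translates into the corresponding hypothesis for $\wt f$ on $[0,\infty)$, and that the strict-inequality parts of (H3') near $r=1$ give the strict part of (H3) for $\wt f$ (needed so that Lemma~\ref{eigen.lem} applies and $\lam_0<\lam_\infty$). Since $\wt f(r,\xi)/\phi_p(\xi)=f(r,-\xi)/\phi_p(-\xi)$, all of this is immediate once written out, but it should be stated. Everything else is a direct citation of Lemma~\ref{asympt.lem}, so I would keep the proof short, essentially: ``The case $(\lam,u)\in\calS^+$ is identical to the proof of Lemma~\ref{asympt.lem}. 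For $\calS^-$, apply that proof to $\wt f(r,\xi):=-f(r,-\xi)$, which satisfies (H) with the same $\lam_0,\lam_\infty$ by (H2')--(H5'), noting that $-u$ solves the corresponding problem.''
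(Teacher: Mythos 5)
Your proposal is correct and takes essentially the same approach as the paper: both reduce the $\calS^-$ case to the positive case by a sign flip exploiting the oddness of $\phi_{p'}$. The paper phrases this more economically --- it simply sets $v_n:=-u_n/|u_n|_0\ge0$ and observes that \eqref{eqforvn} still holds --- whereas you introduce the reflected nonlinearity $\wt f(r,\xi)=-f(r,-\xi)$ and verify the hypothesis transfer; the only literal slip there is that $\wt f$ satisfies (H2') rather than (H2), which is harmless since only the values of $\wt f$ for $\xi\ge0$ and the already-known positivity of $-u_n$ are used.
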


\begin{proof}
The proof is the same as for Lemma~\ref{asympt.lem} in the case where
$\{(\lam_n,u_n)\} \subset \calS^+$. In case $u_n\le0$, as similar proof can be carried
out, setting $v_n:=-u_n/|u_n|_0\ge0$ and remarking that, with this new definition, 
$v_n$ still satisfies \eqref{eqforvn}.
\end{proof}

\begin{remark}\label{lambounds'.rem}\rm
We also have 
$\lam_0\le\lam\le\lam_\infty$ for all $(\lam,u)\in\calS^\pm$.
\end{remark}


\section{Local bifurcation}\label{cranrab.sec}

To prove Theorems~\ref{main.thm} and \ref{main2.thm}, we begin with a local bifurcation
result in the spirit of Crandall and Rabinowitz \cite{cr}. This will allow us to start off
the bifurcating branch from the line of trivial solutions at the point $(\lam_0,0)$
in $\real\x Y$. Crandall and Rabinowitz' original result pertained
to semilinear equations, i.e. $p=2$. A first generalization to $p>2$ was given
in \cite{gs} for a problem very similar to \eqref{dirad.eq}. The main difference in our
setting is that we allow the asymptote $f_0$ to depend on $r$, so that we get the
weighted eigenvalue problem ($\mathrm{E}_0$) instead of problem \eqref{hom}.

In the following, we assume that the principal eigenfunction $v_0$ given in 
Lemma~\ref{eigen.lem} is normalized so that
$$
\int_0^1 r^{N-1} f_0(r) |v_0(r)|^p \diff r = 1.
$$
We define the subspace
$$
Z:=\{z \in Y: \disp \int_0^1 r^{N-1} f_0 |v_0|^{p-2}v_0 z \diff r = 0\}
$$
and we remark that
\begin{equation}\label{decomp}
Y=\mathrm{span}\{v_0\}\oplus W.
\end{equation}
To be able to discuss later cases (H) and (H'),
it will be convenient to state our local bifurcation result more generally, 
in terms of the function 
$G: \real^2 \x Z \to Y$ defined by
$$
G(s,\lam,z):=
\begin{cases}
v_0+z - S_p(\lam f(sv_0+sz)/\phi_p(s)), \quad s\neq0,\\
v_0+z - S_p(\lam f_0\phi_p(v_0+z)), \quad s=0.
\end{cases}
$$
Note that $G(s,\lam,z)=F(\lam,s(v_0+z))/s$ for all $s\neq0$, where 
$F:\real\x Y \to Y$ was defined in \eqref{dirop.eq}. Also, it follows from
the definitions of $\lam_0$ and $v_0$ that $G(0,\lam_0,0)=0$.

\begin{lemma}\label{cranrab.lem}
Let $p>2$ and
suppose that (H1) and (H4) hold. There exist $\eps>0$, a neighbourhood
$U$ of $(\lam_0,0)$ in $\real\x Z$  and a continuous mapping 
$s \mapsto (\lam(s),z(s)): (-\eps,\eps) \to U$ such that 
$(\lam(0),z(0))=(\lam_0,0)$ and
$$
\{(s,\lam,z)\in(-\eps,\eps) \x U: G(s,\lam,z)=0\}
=\{(s,\lam(s),z(s)):s\in(-\eps,\eps)\}.
$$
\end{lemma}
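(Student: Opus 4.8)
The plan is to apply the implicit function theorem to $G$ at the point $(s,\lam,z)=(0,\lam_0,0)$, solving $G(s,\lam,z)=0$ for $(\lam,z)\in\real\times Z$ as a function of the scalar $s$. Since $G(0,\lam_0,0)=0$, it suffices to verify that, on a neighbourhood of $(0,\lam_0,0)$: (i) $G$ is continuous; (ii) the partial Fr\'echet derivative $D_{(\lam,z)}G$ exists and is continuous; and (iii) $L:=D_{(\lam,z)}G(0,\lam_0,0)$ is a bounded linear isomorphism of $\real\times Z$ onto $Y$. The IFT then produces $\eps>0$, a neighbourhood $U$ of $(\lam_0,0)$ and a (continuous, in fact $C^1$) map $s\mapsto(\lam(s),z(s))$ on $(-\eps,\eps)$ with $(\lam(0),z(0))=(\lam_0,0)$ and the asserted local description of the zero set. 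For case (H') one substitutes (H4') for (H4) throughout.

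The only nontrivial point in (i)--(ii) is the behaviour across the singular value $s=0$, where $G$ changes branch. Writing $w:=v_0+z$ and using $\phi_p(sw)=\phi_p(s)\phi_p(w)$, one has $f(sv_0+sz)/\phi_p(s)=g(sw)\phi_p(w)$ for $s\neq0$, with $g$ as in \eqref{gdef}. The two limits in (H4) say precisely that $g(\cdot,\xi)\to f_0$ and $\partial_2 f(\cdot,\xi)/\xi^{p-2}\to(p-1)f_0$ in $C^0[0,1]$ as $\xi\to0$; combined with continuity of the Nemitskii maps induced by $f$ and $\partial_2 f$ (from (H1)) and by $\phi_p,\phi_p'$ (legitimate since $p>2$), this shows that $(s,\lam,z)\mapsto\lam f(sv_0+sz)/\phi_p(s)$, extended by $\lam f_0\phi_p(v_0+z)$ at $s=0$, is continuous into $C^0[0,1]$, is $C^1$ in $(\lam,z)$, and has $(\lam,z)$-derivative depending continuously on $(s,\lam,z)$ up to $s=0$. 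To differentiate $G$ we compose with $S_p$; set $h_0:=\lam_0 f_0\phi_p(v_0)$, so that $u(h_0)=S_p(h_0)=v_0$ by the definitions of $\lam_0,v_0$. Arguing as in Proposition~\ref{pos.prop}, $v_0>0$ on $[0,1)$ is decreasing with $v_0'<0$ on $(0,1]$, so $u(h_0)'(r)=0$ forces $r=0$, where $h_0(0)=\lam_0 f_0(0)\phi_p(v_0(0))\neq0$. Thus $h_0$ satisfies the hypothesis of Theorem~\ref{Sp.thm}(ii), so $S_p$ is $C^1$ on a $C^0$-neighbourhood $V_0$ of $h_0$; since the argument of $S_p$ in $G$ tends to $h_0$ as $(s,\lam,z)\to(0,\lam_0,0)$, it lies in $V_0$ nearby, and the chain rule delivers (i)--(ii).

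For (iii), the $p^*$-homogeneity of $S_p$ and Theorem~\ref{Sp.thm} give, for $(\mu,\bar z)\in\real\times Z$,
\[
L(\mu,\bar z)=-\tfrac{p^*}{\lam_0}\,\mu\,v_0+(I-K)\bar z,\qquad K\bar z:=DS_p(h_0)\bigl[\lam_0(p-1)f_0|v_0|^{p-2}\bar z\bigr].
\]
Since $w\mapsto S_p(\lam_0 f_0\phi_p(w))$ is positively homogeneous of degree $1$, Euler's identity gives $Kv_0=v_0$; by \eqref{DSp.eq} and the flatness estimate from the proof of Theorem~\ref{Sp.thm}, $K$ factors through $C^1[0,1]\hookrightarrow Y$, hence is compact. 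By \eqref{lineariz.eq}, $Kw=w$ is equivalent to $-(r^{N-1}|v_0'|^{p-2}w')'=\lam_0 r^{N-1}f_0|v_0|^{p-2}w$ with $w'(0)=w(1)=0$; the solutions of this linear ODE with $w'(0)=0$ form a one-dimensional space containing $v_0$, so $\ker(I-K)=\mathrm{span}\{v_0\}$. Two integrations by parts, using the equation satisfied by $v_0$, show that the bounded functional $\ell(y):=\int_0^1 r^{N-1}f_0|v_0|^{p-2}v_0\,y\,\dif r$ satisfies $\ell\circ K=\ell$, while the normalization of $v_0$ gives $\ell(v_0)=1$; as $K$ is compact, $\rge(I-K)=\ker\ell$ and $v_0\notin\rge(I-K)$, so $1$ is an algebraically simple eigenvalue of $K$. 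Then $L(\mu,\bar z)=0$ yields, on applying $\ell$, $\mu=0$ and hence $(I-K)\bar z=0$, so $\bar z\in\mathrm{span}\{v_0\}\cap Z=\{0\}$; thus $L$ is injective. Given $y\in Y$, write $y=y_1+\alpha v_0$ with $y_1\in\rge(I-K)$, take $\mu=-\alpha\lam_0/p^*$, solve $(I-K)z_0=y_1$ and split $z_0=\bar z+cv_0$ with $\bar z\in Z$ (possible since $\ell(v_0)\neq0$); then $L(\mu,\bar z)=y$, so $L$ is onto, hence an isomorphism.

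With (i)--(iii) in hand the implicit function theorem applies and gives the curve and the local uniqueness of the zero set of $G$; shrinking $\eps$ and $U$ if needed yields the statement. I expect the main obstacle to be the content of the second paragraph: establishing the joint continuity and $C^1$-regularity of $G$ in $(\lam,z)$ across the singular value $s=0$ --- that is, that the rescaled nonlinearity $f(sw)/\phi_p(s)$ and its $z$-derivative $\partial_2 f(sw)/|s|^{p-2}$ converge in $C^0[0,1]$ to $f_0\phi_p(w)$ and $(p-1)f_0|w|^{p-2}$ as $s\to0$ (exactly what the two limits in (H4) are designed to provide), while simultaneously the argument of $S_p$ stays inside the neighbourhood $V_0$ on which $S_p$ is differentiable --- recall that, by Theorem~\ref{Sp.thm}(ii), $S_p$ is $C^1$ only near functions $h$ with $u(h)'(r)=0\Rightarrow h(r)\neq0$.
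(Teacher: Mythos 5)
Your proposal is correct and follows the same overall strategy as the paper: verify that $G$ is continuous, that $(\lam,z)\mapsto G(s,\lam,z)$ is $C^1$ with derivative continuous up to $s=0$ (using (H4) for the rescaled nonlinearity and Theorem~\ref{Sp.thm}(ii) at $h_0=\lam_0f_0\phi_p(v_0)$, whose hypothesis you rightly check via the sign of $v_0'$ and $h_0(0)\neq0$), show that $D_{(\lam,z)}G(0,\lam_0,0)$ is an isomorphism of $\real\times Z$ onto $Y$, and invoke the parameter-dependent implicit function theorem of \cite[Appendix~A]{cr}. The only genuine divergence is in the isomorphism step: the paper first proves that $Z$ is invariant under the compact operator $L=K$ and then reduces to a compact perturbation of the identity on $\real\times Z$, so that injectivity alone suffices; you instead work in $Y$, use Euler's identity to get $Kv_0=v_0$, and use the functional $\ell$ (with $\ell\circ K=\ell$, which is the same double integration by parts as the paper's invariance computation) to identify $\rge(I-K)=Z$ and check injectivity and surjectivity of $L$ by hand. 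Both variants are fine and cost about the same. Two caveats. First, your assertion that the solutions of $-(r^{N-1}|v_0'|^{p-2}w')'=\lam_0 r^{N-1}f_0|v_0|^{p-2}w$, $w'(0)=w(1)=0$, form a one-dimensional space is precisely the non-routine point: for $p>2$ the coefficient $r^{N-1}|v_0'|^{p-2}$ vanishes at $r=0$ (since $v_0'(0)=0$), so this is not standard ODE uniqueness; the paper delegates exactly this to an argument as in \cite[Theorem~7]{gs}, and your write-up should at least acknowledge that this needs a separate argument rather than state it as obvious. Second, your parenthetical claim that the curve $s\mapsto(\lam(s),z(s))$ is $C^1$ is unjustified: you only establish continuity of $G$ in $s$ (differentiability in $s$ at $s=0$ is not available from (H4)), so the version of the implicit function theorem you can apply yields a continuous curve only --- which is all the lemma asserts, so nothing is lost, but the stronger claim should be dropped.
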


\begin{proof}
Our proof follows that of \cite[Theorem~1]{gs} but we give it here for
completeness. Under hypotheses (H1) and (H4), it is easily seen that $G$ is continuous.
It follows from Theorem~\ref{Sp.thm}(ii) that $S_p$ is $C^1$ in a neighbourhood
of $\lam_0 f_0 \phi_p(v_0)$ in $Y$. A routine verification then shows that there
is a neighbourhood $U$ of $(0,\lam_0,0)$ in $\real^2\x Z$ such that the mapping
$(\lam,z) \mapsto G(s,\lam,z)$ is differentiable in 
$$
A_s:=\{(\lam,z) \in \real\x Z: (s,\lam,z) \in U\},
$$
for any $s\in\real$ such that $A_s \neq\emptyset$. Furthermore, the Fr\'echet derivative
$D_{(\lam,z)} G$ is continuous on $U$. Since 
\begin{equation}\label{opeigen}
v_0=S_p(\lam_0 f_0 \phi_p(v_0)), 
\end{equation}
it follows
from \eqref{DSp.eq} that
\begin{align}\label{derivG.eq}
& D_{(\lam,z)}G(0,\lam_0,0)(\bar\lam,\bar z)= \\
& \bar z - \lam_0(p^*)^{-1} DS_p(\lam f_0 \phi_p(v_0))f_0|v_0|^{p-2} \bar z
-p^* (\bar\lam/\lam_0)v_0, \quad  (\bar\lam, \bar z) \in \real \x Z.
\end{align}
To conclude the proof using the implicit function theorem as stated in Appendix~A
of \cite{cr}, we need only check that 
$D_{(\lam,z)}G(0,\lam_0,0): \real \x Z \to Y$ is an isomorphism. 

Let us first show that the mapping
$$
L\bar z :=  \lam_0(p^*)^{-1} DS_p(\lam f_0 \phi_p(v_0))f_0|v_0|^{p-2} \bar z
$$
leaves the subspace $Z$ invariant. Suppose $\bar z \in Z$ and let $z=L\bar z$.
By \eqref{lineariz.eq} and \eqref{opeigen}, we have
\begin{equation}\label{eqforz}
\setlength\arraycolsep{0.05cm}
\left\{ \begin{array}{rlll}
-(r^{N-1}|v_0'|^{p-2}z')' &= &\lam_0 r^{N-1} f_0|v_0|^{p-2}\bar z, \quad 0<r<1,\\
z'(0) = z(1) &= &0.
\end{array}\right.
\end{equation}
Multiplying both sides of the equation by $v_0$ and integrating by parts twice yields
$$
\int_0^1 r^{N-1} f_0 |v_0|^{p-2}v_0 z \diff r = 
\int_0^1 r^{N-1} f_0 |v_0|^{p-2}v_0 \bar z \diff r = 0,
$$
showing that $z \in Z$. In view of the decomposition \eqref{decomp},
$$
D_{(\lam,z)}G(0,\lam_0,0)(\bar\lam,\bar z)=0 \implies 
\bar\lam=0 \ \text{and} \ \bar z = L \bar z.
$$
Then $\bar z$ is a solution of \eqref{eqforz} and an argument similar to the proof
of \cite[Theorem~7]{gs} shows that there exists $c\in\real$ such that $\bar z= c v_0$.
Since $\bar z \in Z$, it follows that $c=0$, showing that the null space
$N(D_{(\lam,z)}G(0,\lam_0,0))=\{0\}$.

Finally, from \eqref{decomp} and the invariance of $Z$ under $L$,
$D_{(\lam,z)}G(0,\lam_0,0)$ is isomorphically equivalent to the operator 
$T: \real\x Z \to \real\x Z$ defined by
$$
T(\bar\lam,\bar z)=(\bar\lam,\bar z) -((1+p^*/\lam_0)\bar\lam, L\bar z).
$$
It follows from Theorem~\ref{Spcont.thm} that 
$T$ is a compact perturbation of the identity on
$\real\x Z$. Therefore, the triviality of $N(D_{(\lam,z)}G(0,\lam_0,0))$ implies that 
$D_{(\lam,z)}G(0,\lam_0,0)$ is an isomorphism, finishing the proof.
\end{proof}

\begin{remark}\rm
As noted earlier, the above result was presented in \cite{gs} in the case where 
$f_0\equiv1$.
However, the proof relies heavily on the differentiability properties of the integral operator
$S_p$, given by Theorem~\ref{Sp.thm}, and the arguments establishing
these properties in \cite{gs} seem incomplete. Hence, in addition to the slightly more
general context dealt with here, the present work completes the proof of 
\cite[Theorem~1]{gs}.
\end{remark}

\begin{remark}\label{diff.rem}\rm
Since the differentiability results in Theorem~\ref{Sp.thm} cover the whole range $p>1$,
we first had some hope to obtain bifurcation for all $p>1$. It turns out that the integration
by parts arguments involved in the proof of Lemma~\ref{cranrab.lem} require at least
$p\ge 1+1/N$ (for the boundary terms to vanish). Unfortunately, the differentiability of
the function $G$ in the present functional setting requires $p\ge2$, and we have not
been able to find another suitable setting allowing for $p<2$.
\end{remark}

We can now state the local bifurcation results for equation \eqref{dirop.eq}.

\begin{theorem}\label{loc1.thm}
Let $p>2$ and
suppose that (H) holds. There exist $\eps_0\in(0,\eps)$ and a neighbourhood $U_0$ of
$(\lam_0,0)$ in $\real\x Y$ such that
\begin{equation}\label{loc1.eq}
\{(\lam,u) \in U_0: F(\lam,u)=0\}=\{(\lam(s),s(v_0+z(s))): s \in [0,\eps_0)\}.
\end{equation}
\end{theorem}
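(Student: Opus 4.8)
The plan is to translate Lemma~\ref{cranrab.lem}, which describes the zero set of $G$ near $(0,\lam_0,0)$, back into a statement about the zero set of $F$ near $(\lam_0,0)$, using the relation $G(s,\lam,z)=F(\lam,s(v_0+z))/s$ for $s\neq0$ and the positivity/negativity information from Section~\ref{prop.sec}. First I would recall from Lemma~\ref{cranrab.lem} the continuous curve $s\mapsto(\lam(s),z(s))$ on $(-\eps,\eps)$ with $(\lam(0),z(0))=(\lam_0,0)$, whose graph is exactly the zero set of $G$ in $(-\eps,\eps)\times U$. For $s\neq0$, $G(s,\lam,z)=0$ is equivalent to $F(\lam,s(v_0+z))=0$, so the points $u=s(v_0+z(s))$ with $0<s<\eps$ give nontrivial solutions $(\lam(s),u)\in\calS$ (after checking $s(v_0+z(s))\not\equiv0$, which holds for small $s\neq0$ since $z(s)\to0$ and $v_0\not\equiv0$).

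Next I would choose $\eps_0\in(0,\eps)$ and a neighbourhood $U_0$ of $(\lam_0,0)$ in $\real\times Y$ small enough that: (i) the map $s\mapsto(\lam(s),s(v_0+z(s)))$ sends $[0,\eps_0)$ into $U_0$; and (ii) every nontrivial solution $(\lam,u)\in U_0\cap\calS$ has $u=s(v_0+z)$ for some $(s,\lam,z)\in(-\eps,\eps)\times U$. For (ii), the idea is that any $u$ close to $0$ in $Y$ can be written via the decomposition \eqref{decomp} as $u=sv_0+w$ with $s\in\real$ and $w\in W$ small; writing $w=sz$ when $s\neq0$ identifies $(s,\lam,z)$, and Lemma~\ref{cranrab.lem} then forces $(\lam,z)=(\lam(s),z(s))$. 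The only solutions in $U_0$ with $s=0$ are trivial (taking $U_0$ small enough that $0$ is the only solution on $\{u\in W\}$ near $0$, which follows since $G(0,\lam,z)=0$ forces $v_0+z=S_p(\lam f_0\phi_p(v_0+z))$, an eigenvalue-type equation incompatible with $v_0+z$ close to $v_0$ unless we are exactly at $(\lam_0,0)$ — here one uses that the zero set of $G$ near $(0,\lam_0,0)$ is the single curve, which at $s=0$ is just the point $(\lam_0,0)$). Finally, the sign restriction to $s\ge0$: by Proposition~\ref{pos.prop} every $(\lam,u)\in\calS$ has $u>0$ on $[0,1)$, and since $v_0+z(s)>0$ for $|s|$ small, the solution $s(v_0+z(s))$ is positive iff $s\ge0$; hence only $s\in[0,\eps_0)$ contributes, giving \eqref{loc1.eq}.

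The main obstacle I expect is the precise matching in step (ii) — showing that \emph{every} zero of $F$ in a sufficiently small $U_0$ actually comes from the curve, i.e. that shrinking $U$ in $(s,\lam,z)$-space and shrinking $U_0$ in $(\lam,u)$-space can be done compatibly. The subtlety is that the correspondence $u\leftrightarrow(s,z)$ via $u=s(v_0+z)$, $z\in Z$, degenerates as $s\to0$, so one must argue that a sequence of solutions $(\lam_k,u_k)\in\calS$ with $u_k\to0$ in $Y$ yields $s_k\to0$ and $z_k\to0$, which combined with $\lam_k\to\lam_0$ (Lemma~\ref{asympt.lem}) places $(s_k,\lam_k,z_k)$ eventually inside $U$; then Lemma~\ref{cranrab.lem} applies. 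This is a routine compactness-plus-continuity argument but needs to be laid out carefully. Everything else — the equivalence $G=0\Leftrightarrow F=0$ for $s\neq0$, the sign bookkeeping, the choice of $\eps_0$ — is straightforward given the results already established.
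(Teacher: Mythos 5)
Your proposal is correct and follows essentially the same route as the paper: the forward inclusion comes directly from Lemma~\ref{cranrab.lem}, negative $s$ is excluded via the positivity of solutions in Proposition~\ref{pos.prop}, and the reverse inclusion is handled by the compactness argument you sketch (the paper cites the analogous argument in \cite[p.~39]{gs} for exactly this step). You correctly identify the one delicate point --- that $u_k\to0$ must be upgraded to $s_k\to0$ \emph{and} $z_k=u_k/s_k-v_0\to0$, which requires compactness of $S_p$ rather than the decomposition alone.
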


\begin{proof}
It follows from Lemma~\ref{cranrab.lem} that $(\lam(s),s(v_0+z(s)))$ is a solution of 
\eqref{dirop.eq} for all $s \in [0,\eps)$. 
To prove the reverse inclusion in \eqref{loc1.eq},
let us first remark that, by Proposition~\ref{pos.prop}, 
$s\in(-\eps,0)$ yields no solutions of \eqref{dirop.eq}. Furthermore, a compactness
argument similar to that in \cite[p.39]{gs} shows that any solution in
a small enough neighbourhood of $(\lam_0,0)$ in $\real\x Y$ must have the form
$(\lam(s),s(v_0+z(s)))$ for some $s \in [0,\eps)$. This completes the proof.
\end{proof}

\begin{theorem}\label{loc2.thm}
Let $p>2$ and
suppose that (H') holds. There exist $\eps_1\in(0,\eps)$ and a neighbourhood 
$U_1$ of $(\lam_0,0)$ in $\real\x Y$ such that
\begin{equation}\label{loc2.eq}
\{(\lam,u) \in U_1: F(\lam,u)=0\}=\{(\lam(s),s(v_0+z(s))): s \in (-\eps_1,\eps_1)\},
\end{equation}
with
\begin{equation}\label{loc2-.eq}
\{(\lam(s),s(v_0+z(s))): s \in (-\eps_1,0)\}\subset\calS^-
\end{equation}
and
\begin{equation}\label{loc2+.eq}
\{(\lam(s),s(v_0+z(s))): s \in (0,\eps_1)\}\subset\calS^+.
\end{equation}
\end{theorem}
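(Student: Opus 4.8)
The plan is to mimic the proof of Theorem~\ref{loc1.thm}, adapting it to the two-sided situation allowed by (H'). The starting point is Lemma~\ref{cranrab.lem}, which under (H1) and (H4') (these are the hypotheses needed for $G$ to be continuous and differentiable near $(0,\lam_0,0)$; note (H4') is exactly the two-sided version of (H4)) gives a continuous curve $s\mapsto(\lam(s),z(s))$ on $(-\eps,\eps)$ exhausting all zeros of $G$ in a neighbourhood of $(0,\lam_0,0)$. Since $G(s,\lam,z)=F(\lam,s(v_0+z))/s$ for $s\neq0$, each $(\lam(s),s(v_0+z(s)))$ with $s\neq0$ solves $F=0$; by continuity it also solves $F=0$ at $s=0$, giving the trivial solution $(\lam_0,0)$. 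This proves the inclusion ``$\supset$'' in \eqref{loc2.eq} for some $\eps_1\le\eps$.

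For the reverse inclusion, I would argue exactly as in \cite[p.39]{gs} (the compactness argument already invoked in the proof of Theorem~\ref{loc1.thm}): if $(\lam_n,u_n)$ is a sequence of non-trivial solutions of $F=0$ with $(\lam_n,u_n)\to(\lam_0,0)$, write $u_n=s_n(v_0+z_n)$ after normalising appropriately, use compactness of $S_p$ (Theorem~\ref{Spcont.thm}) together with the decomposition \eqref{decomp} to extract a convergent subsequence of the normalised parts, identify the limit with $v_0$, and conclude that eventually $(s_n,\lam_n,z_n)$ solves $G=0$ near $(0,\lam_0,0)$, hence lies on the curve from Lemma~\ref{cranrab.lem}. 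The only difference from the (H) case is that now $s_n$ may have either sign; but this causes no difficulty since the argument is symmetric in $s$, and shrinking $\eps$ to some $\eps_1$ if necessary covers the whole two-sided neighbourhood. This yields \eqref{loc2.eq}.

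It remains to establish the sign statements \eqref{loc2-.eq} and \eqref{loc2+.eq}. Here the key observation is that, since $v_0>0$ on $[0,1)$ and $z(s)\to0$ in $Y$ as $s\to0$, the function $v_0+z(s)$ stays strictly positive on $[0,1)$ for $|s|$ small (shrinking $\eps_1$ once more), and in fact $v_0+z(s)\ge c>0$ on any compact subset of $[0,1)$ uniformly for small $s$; near $r=1$ one uses that $v_0'(1)<0$ together with the $C^1$-smallness of $z(s)$ — but since the curve lives in $Z\subset Y$ only, one instead observes that $u(\lam)=s(v_0+z(s))$ solves \eqref{dirad.eq}, so Proposition~\ref{pos'.prop} applies: any non-trivial solution with $u\ge0$ lies in $\calS^+$ and any with $u\le0$ lies in $\calS^-$. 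Thus it suffices to check that $s(v_0+z(s))$ has the sign of $s$, i.e. $v_0+z(s)\ge0$; this follows because $v_0+z(s)\to v_0>0$ in $Y$, so for $\eps_1$ small enough $\|z(s)\|$ is so small that $v_0+z(s)$ cannot change sign — more precisely, if it did, then since it is itself (up to the factor $s$) a solution of \eqref{dirop.eq}, continuity of solutions and Proposition~\ref{pos'.prop} would force a contradiction with $v_0>0$ on $[0,1)$ and $v_0'(1)<0$. Hence $s>0$ gives a solution in $\calS^+$ and $s<0$ one in $\calS^-$, proving \eqref{loc2+.eq} and \eqref{loc2-.eq}.

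The main obstacle is the last paragraph: controlling the sign of $v_0+z(s)$ uniformly up to the boundary $r=1$, since $z(s)$ is only known to converge in the $Y=C^0[0,1]$ norm and $v_0$ vanishes at $r=1$. The cleanest way around this is to avoid pointwise sign control near $r=1$ altogether and instead feed $u(\lam)=s(v_0+z(s))$ directly into the a~priori positivity result Proposition~\ref{pos'.prop}: one only needs to know that the solution does not change sign, which can be extracted from a connectedness argument along the curve $s\mapsto(\lam(s),z(s))$ starting from $s=0$, where the solution is trivial, combined with the fact that $\calS^+$ and $\calS^-$ are relatively open and closed in the set of non-trivial solutions near $(\lam_0,0)$ by Proposition~\ref{pos'.prop} (a non-trivial solution is either everywhere $\ge0$ and then $>0$ on $[0,1)$, or everywhere $\le0$ and then $<0$ on $[0,1)$, with no solutions changing sign in this neighbourhood). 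This gives the sign splitting without any delicate boundary estimate.
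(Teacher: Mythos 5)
Your treatment of the characterization \eqref{loc2.eq} is fine and matches the paper: Lemma~\ref{cranrab.lem} applies verbatim under (H4') and gives the inclusion ``$\supset$'', and the reverse inclusion is the same compactness argument from \cite[p.39]{gs} already used for Theorem~\ref{loc1.thm}, now with $s$ of either sign.

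The sign statements \eqref{loc2-.eq}--\eqref{loc2+.eq} are where your argument has a genuine gap, and you have in fact put your finger on the right difficulty (sign control near $r=1$, where $v_0$ vanishes and $z(s)$ is only small in $Y=C^0[0,1]$) before retreating to a workaround that is circular. Your ``clean way around'' rests on the assertion that every non-trivial solution near $(\lam_0,0)$ is sign-definite (``no solutions changing sign in this neighbourhood''), attributed to Proposition~\ref{pos'.prop}. But that proposition only applies to $(\lam,u)\in\calS^\pm$, i.e.\ to solutions \emph{already assumed} to satisfy $\pm u\ge0$; it says nothing about a non-trivial solution not known a priori to have a sign, and sign-changing solutions of \eqref{dirop.eq} do exist in general (they bifurcate from higher eigenvalues). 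So the dichotomy ``every nearby non-trivial solution lies in $\calS^+\cup\calS^-$'' is essentially the statement to be proved, not an input. Moreover, even granting that dichotomy, your connectedness argument has no anchor: at $s=0$ the solution is trivial, hence in neither $\calS^+$ nor $\calS^-$, so you still need to determine the sign of $s(v_0+z(s))$ for at least one small $s\neq0$ by a direct argument.

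The missing ingredient is a bootstrap to $C^1$, which the equation provides for free. Writing $u_s:=s(v_0+z(s))$ and $v_s:=u_s/|u_s|_0$ for $s\neq0$, one has $v_s=S_p\bigl(\lam(s)\,g(u_s)\phi_p(v_s)\bigr)$ as in \eqref{eqforvn}; since $|u_s|_0\to0$ and $g(u_s)\phi_p(v_s)\to f_0\phi_p(\bar v)$ in $Y$ as in the proof of Lemma~\ref{asympt.lem}, the continuity of $S_p:C^0[0,1]\to C^1[0,1]$ (Theorem~\ref{Spcont.thm}) upgrades the convergence to $v_s\to\bar v$ in $C^1[0,1]$, and simplicity of $\lam_0$ forces $\bar v=\pm v_0$ (with the sign of $s$, since $v_s(0)\to\pm v_0(0)$ and $v_0(0)>0$). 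Because $v_0>0$ on $[0,1)$ \emph{and} $v_0'(1)<0$, $C^1$-closeness to $\pm v_0$ gives $\pm v_s>0$ on $[0,1)$ for $|s|$ small: strict positivity of $\pm v_0$ handles $[0,1-\de]$, and $\pm v_s'<0$ near $r=1$ together with $v_s(1)=0$ handles $[1-\de,1)$. This is what the paper's phrase ``follows from the construction of the solutions'' is compressing, and it is the step your proposal does not actually supply.
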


\begin{proof}
The local characterization of solutions in \eqref{loc2.eq} follows similarly to 
\eqref{loc1.eq} in Theorem~\ref{loc1.thm}. For $\eps_1>0$ small enough,
statements \eqref{loc2-.eq} and
\eqref{loc2+.eq} follow from the construction of the solutions $(\lam(s),s(v_0+z(s)))$.
\end{proof}


\section{Global continuation}\label{global.sec}

Our goal in this final section is to complete the proofs of Theorems~\ref{main.thm} and
\ref{main2.thm}. Namely, we will first show that the local curves of solutions obtained
in Section~\ref{cranrab.sec} can be parametrized by $\lam$ and then we will prove that
they can be extended globally.

\subsection{Proof of Theorem~\ref{main.thm}}

We begin with a non-degeneracy result implying that, in fact, through any non-trivial
solution of \eqref{dirop.eq}, there passes a (local) continuous curve of solutions,
parametrized by $\lam$.

\begin{lemma}\label{IFT.lem}
The function $F \in C([0,\infty)\x Y, Y)$ defined in \eqref{dirop.eq} is continuously
differentiable in a neighbourhood of any point $(\lam,u) \in \calS$, with
$$
D_u F(\lam,u) v = v - \lam DS_p(\lam f(u)) \partial_2 f(u) v, \quad v\in Y.
$$
Furthermore, for any $(\lam,u) \in \calS$, $D_u F(\lam,u):Y\to Y$ is an isomorphism.
\end{lemma}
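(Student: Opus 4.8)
The plan is to combine the differentiability result for $S_p$ from Theorem~\ref{Sp.thm}(ii) with a spectral/positivity argument analogous to the verification of the isomorphism property in Lemma~\ref{cranrab.lem}. First I would check the hypothesis of Theorem~\ref{Sp.thm}(ii) at the point $h=\lam f(u)$: by Proposition~\ref{pos.prop}, $u$ is decreasing with $u'(1)<0$, so $u'(r)=0$ forces $r=0$; and at $r=0$ we have $f(0,u(0))>0$ by (H2) since $u(0)>0$. Hence $u(h)'(r)=0 \Rightarrow h(r)\neq 0$, so Theorem~\ref{Sp.thm}(ii) applies: $S_p$ is $C^1$ on a neighbourhood $V_0$ of $\lam f(u)$ in $Y$, and since $v\mapsto \lam f(u)$ (the Nemitskii map composed with scaling) is continuous and $\partial_2 f(u)$ is a bounded multiplication operator by (H1), the chain rule gives $F\in C^1$ near $(\lam,u)$ with the stated formula for $D_u F(\lam,u)$.

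For the isomorphism claim, I would argue exactly as in Lemma~\ref{cranrab.lem}: by Theorem~\ref{Spcont.thm}, $DS_p(\lam f(u))\partial_2 f(u)(\cdot)$ is compact (it factors through the compact operator $S_p$, or more precisely $DS_p$ inherits compactness from the $\calI\circ(\text{multiplication})\circ J$ structure with $J$ compact into $C^0$), so $D_u F(\lam,u)$ is a compact perturbation of the identity on $Y$. By the Fredholm alternative it suffices to show the kernel is trivial. Suppose $D_u F(\lam,u)v=0$; then, using \eqref{lineariz.eq} with $h=\lam f(u)$ and $\bar h = \lam\partial_2 f(u) v/p^*$, the function $v$ solves the linearized problem
\begin{equation*}
\left\{ \begin{array}{rlll}
-(r^{N-1}|u'(r)|^{p-2}v'(r))' &= &\lam r^{N-1} \partial_2 f(r,u(r))\, v(r), \quad 0<r<1,\\
v'(0) = v(1) &= &0.
\end{array}\right.
\end{equation*}

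The main obstacle is ruling out nontrivial solutions of this linearized equation. The idea is a comparison between the linearized operator and the operator obtained by replacing $\partial_2 f(r,u(r))$ with $(p-1)f(r,u(r))/u(r)$: hypothesis (H3) gives $\partial_2 f(r,u(r))\le (p-1)f(r,u(r))/u(r)$ for $r\in[0,1]$, with strict inequality on $(1-\de,1]$ (since $u(r)<\ep$ there, $u$ being continuous with $u(1)=0$). Meanwhile $u$ itself solves $-(r^{N-1}|u'|^{p-2}u')' = (p-1)^{-1}\cdot(p-1)\lam r^{N-1} f(r,u) = \lam r^{N-1}\frac{f(r,u)}{u}\,(p-1)\cdot\frac{u}{p-1}$; more usefully, testing the $v$-equation against $u$ and the ($u$-derived) equation $-(r^{N-1}|u'|^{p-2}u')' = \lam r^{N-1} f(r,u)$ against $v$, integrating by parts twice (the boundary terms vanish because $u'(1)<0$ is finite, $v(1)=0$, $v'(0)=0$, and the weights $r^{N-1}|u'|^{p-2}$ are integrable near $0$ as in the proof of Theorem~\ref{Sp.thm}), one obtains
\begin{equation*}
\int_0^1 r^{N-1}\Big(\lam\partial_2 f(r,u)-\lam\tfrac{f(r,u)}{u}\Big) u\, v\, \diff r = 0 \quad\text{after rearrangement, hence}\quad \int_0^1 r^{N-1}\big((p-1)f(r,u)-\partial_2 f(r,u)\,u\big)\tfrac{v}{u}\,\diff r=0,
\end{equation*}
provided $v$ does not change sign. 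To get the sign-definiteness of $v$ I would invoke that $v$ is a principal (first) eigenfunction: the linearized operator has the same structure as $(\mathrm{E}_0)$ with a positive weight, and by the Sturm-type theory of \cite{w} a nontrivial kernel element must be (a multiple of) the positive principal eigenfunction, so $v$ can be taken positive on $[0,1)$. Then the integrand above is $\ge 0$ by (H3) and $>0$ on $(1-\de,1)$ where $v>0$, a contradiction. This forces $v\equiv 0$, so the kernel is trivial and $D_u F(\lam,u)$ is an isomorphism. The delicate points to handle carefully are the justification of the double integration by parts (integrability of the singular weights, vanishing boundary terms), and the identification of any kernel element with a principal eigenfunction so that its sign is controlled — this is where I expect to lean hardest on the comparison results of \cite{w} and arguments paralleling \cite[Theorem~7]{gs} already used in Lemma~\ref{cranrab.lem}.
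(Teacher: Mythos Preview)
Your differentiability argument and the reduction via Fredholm theory to showing $N(D_uF(\lam,u))=\{0\}$ are correct and match the paper. The gap is in how you rule out nontrivial kernel elements. You integrate the Lagrange identity over the full interval $[0,1]$, which forces you to assume $v$ is sign-definite; you then try to justify this by claiming that any nontrivial $v$ must be a principal eigenfunction. That step is not supported: the weight $\partial_2 f(r,u(r))$ in the linearized equation is not known to be positive under (H1)--(H3), the coefficient $|u'|^{p-2}$ degenerates at $r=0$ so standard linear Sturm--Liouville theory does not immediately apply, and \cite{w} treats the nonlinear radial $p$-Laplacian, not this linear problem. More fundamentally, knowing that $p^*\lam$ is \emph{an} eigenvalue of the linearized problem does not tell you it is the \emph{first} one, so you cannot infer positivity of the eigenfunction --- the reasoning is circular.

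The paper sidesteps this entirely by evaluating the Lagrange identity on $[0,r_1]$, where $r_1$ is the smallest positive zero of $v$ (taking $v>0$ on $(0,r_1)$ without loss of generality). On that subinterval the sign of $v$ is known by construction, and one obtains
\[
r_1^{N-1}|u'(r_1)|^{p-2}u(r_1)v'(r_1)=\lam\int_0^{r_1}s^{N-1}\big[f(u)-p^*\partial_2 f(u)u\big]v\,\diff s.
\]
If $r_1<1$ the left side is negative (since $u(r_1)>0$ and $v'(r_1)<0$, the latter by uniqueness for the linear ODE, which is regular on $(0,1)$ because $u'<0$ there) while the right side is $\ge 0$ by (H3); if $r_1=1$ the left side vanishes while the right side is strictly positive by the strict part of (H3) near $r=1$. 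Either way one reaches a contradiction, with no appeal to a global sign of $v$. (Minor point: your linearized equation is missing the factor $p^*$; with $\bar h=\lam\partial_2 f(u)v$, \eqref{lineariz.eq} gives $-(r^{N-1}|u'|^{p-2}v')'=p^*\lam r^{N-1}\partial_2 f(u)v$.)
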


\begin{proof}
The statement about the differentiability of $F$ follows from Theorem~\ref{Sp.thm}(ii)
and Proposition~\ref{pos.prop}. Furthermore, we see that
$D_u F(\lam,u):Y\to Y$ is a compact perturbation of the identity. Therefore,
to show that it is an isomorphism, we only need to prove that
$N(D_u F(\lam,u))=\{0\}$.
Let $v\in N(D_u F(\lam,u))$. By \eqref{lineariz.eq}, we have 
\begin{equation}\label{eqforv}
\setlength\arraycolsep{0.05cm}
\left\{ \begin{array}{rlll}
-(r^{N-1}|u'|^{p-2}v')' &= & p^* \lam r^{N-1} \partial_2 f(u) v, \quad 0<r<1,\\
v'(0) = v(1) &= &0.
\end{array}\right.
\end{equation}
Multiplying the equation in \eqref{eqforv} by $u$, that in \eqref{dirad.eq} by $v$, 
subtracting and integrating by parts yield
\begin{equation}\label{lagrange}
r^{N-1}|u'|^{p-2}(uv'-u'v)(r)=\lam\int_0^r s^{N-1}[f(u)-p^*\partial_2f(u)u]v\diff s,
\quad r\in[0,1].
\end{equation}
Suppose that $v\not\equiv0$, and let $r_1>0$ be the smallest
positive zero of $v$. Without loss of generality, we can suppose $v>0$ on $(0,r_1)$. 
If $r_1<1$, we have $u(r_1)v'(r_1)<0$. However by (H3),
\begin{equation*}\label{signofv}
r_1^{N-1}|u'(r_1)|^{p-2}u(r_1)v'(r_1)=
\lam\int_0^{r_1} s^{N-1}[f(u)-p^*\partial_2f(u)u]v\diff s \ge 0,
\end{equation*}
a contradiction. If $r_1=1$, (H3) and Proposition~\ref{pos.prop} imply
$$
0=|u'(1)|^{p-2}u(1)v'(1)=
\lam \int_0^{1} s^{N-1}[f(u)-p^*\partial_2f(u)u]v\diff s > 0,
$$
again a contradiction. Hence, $v\not\equiv0$ is impossible and so 
$N(D_u F(\lam,u))=\{0\}$.
\end{proof}

By Remark~\ref{lambounds.rem}, Theorem~\ref{loc1.thm} and Lemma~\ref{IFT.lem}, 
the implicit function theorem yields a maximal open interval $(\lam_0,\wt\lam)$ with
$\lam_0<\wt\lam\le\lam_\infty$ and a mapping
$u \in C^1((\lam_0,\wt\lam),Y)$ such that $(\lam,u(\lam))\in\calS$ for all 
$\lam\in(\lam_0,\wt\lam)$, and $\lim_{\lam\to\lam_0}u(\lam)=0$. 
Let us show that $\wt\lam=\lam_\infty$. Suppose by contradiction that 
$\lam_0<\wt\lam<\lam_\infty\le\infty$, and consider a sequence $\lam_n\to\wt\lam$.
If $|u(\lam_n)|_0$ is unbounded, it follows by Lemma~\ref{asympt.lem} that
$\wt\lam=\lam_\infty$ and we are done. On the other hand, if $|u(\lam_n)|_0$ is 
bounded, a compactness argument similar to that yielding the convergence of 
$\{v_n\}$ in the proof of Lemma~\ref{asympt.lem} shows that there exists
$\wt u\in Y$ such that $u(\lam_n)\to\wt u$ (up to a subsequence), and
\begin{equation*}
\wt u = S_p(\wt\lam f(\wt u)).
\end{equation*}
Note that, by Lemma~\ref{asympt.lem}, we cannot have $u\equiv0$.
Hence, $(\wt\lam,\wt u)\in\calS, \ F(\wt\lam,\wt u)=0$, and so by Lemma~\ref{IFT.lem}
and the implicit function theorem, we can extend the curve $u(\lam)$ through the point
$(\wt\lam,\wt u)$, contradicting the maximality of $\wt\lam$. Therefore, 
$\wt\lam=\lam_\infty$, and we have a solution curve
$$
\calS_0:=\{(\lam,u(\lam)):\lam\in(\lam_0,\lam_\infty)\}\subset\calS.
$$

We next prove that $\lim_{\lam\to\lam_\infty}|u(\lam)|_0=\infty$. In the case where
(H6)(a) holds, this readily follows by the above argument for if $|u(\lam)|_0$ were 
bounded as $\lam\to\lam_\infty<\infty$, we could continue the solution curve beyond
$\lam=\lam_\infty$. In case (H6)(b) holds, the result follows from

\begin{lemma}\label{blowup.lem}
Suppose that (H6)(b) holds, and consider 
$(\lam_n,u_n)\in\calS$ with $\lam_n\to\infty$. Then $|u_n|_0\to\infty$.
\end{lemma}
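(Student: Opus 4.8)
The plan is to argue by contradiction: suppose $(\lam_n, u_n) \in \calS$ with $\lam_n \to \infty$ but $|u_n|_0$ remains bounded (along a subsequence), say $|u_n|_0 \le M$ for all $n$. Since $N=1$ and (H6)(b) holds, we have $f_\infty \equiv 0$, and the strategy is to show that a bounded sequence of solutions cannot support blow-up of the parameter. First I would invoke compactness: since $\{f(u_n)\}$ is bounded in $Y$ (because $f$ maps bounded sets to bounded sets and $|u_n|_0 \le M$), the sequence $\{S_p(f(u_n))\}$ is bounded in $C^1[0,1]$, hence precompact in $Y$. But $u_n = \lam_n^{p^*} S_p(f(u_n))$, so $|u_n|_0 = \lam_n^{p^*}|S_p(f(u_n))|_0$. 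The key point is to obtain a \emph{lower} bound on $|S_p(f(u_n))|_0$ that stays bounded away from zero; combined with $\lam_n^{p^*} \to \infty$ this forces $|u_n|_0 \to \infty$, a contradiction.

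The main work, then, is the lower bound on $|S_p(f(u_n))|_0 = S_p(f(u_n))(0) = \int_0^1 \phi_{p'}\big(\int_0^s f(t,u_n(t))\,\dif t\big)\,\dif s$ (using $N=1$). Here I would use Proposition~\ref{pos.prop}: $u_n > 0$ on $[0,1)$, $u_n$ is decreasing, and — crucially, since $N=1$ — $u_n$ is concave. Concavity plus $u_n(1)=0$ gives $u_n(r) \ge (1-r)|u_n|_0$, and in particular $u_n(1/2) \ge \tfrac12 |u_n|_0$. I would try to bound $f(t,u_n(t))$ from below on a fixed subinterval, say $[1/4,1/2]$, in terms of $|u_n|_0$. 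The difficulty is that $f(t,\xi)$ need not be bounded below by a positive constant uniformly as $\xi$ ranges over a bounded interval $(0,M]$ — it can be small when $\xi$ is small. However, we can split into two cases. If $|u_n|_0 \to 0$ along a subsequence, then by Lemma~\ref{asympt.lem} (which handles the $|u_n|_0 \to 0$ case uniformly, even for $f_\infty \equiv 0$, as shown in its proof) we would get $\lam_n \to \lam_0 < \infty$, contradicting $\lam_n \to \infty$. So we may assume $|u_n|_0 \ge \de_0 > 0$ for some $\de_0$ and all large $n$; then $u_n(t) \ge (1-t)\de_0 \ge \tfrac12 \de_0$ on $[0,1/2]$, and by continuity and positivity of $f$, $\min\{f(t,\xi) : t\in[0,1/2], \ \xi \in [\tfrac12\de_0, M]\} =: c_0 > 0$. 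Hence $f(t,u_n(t)) \ge c_0$ for $t \in [0,1/2]$, giving $\int_0^s f(t,u_n(t))\,\dif t \ge c_0 s$ and therefore $S_p(f(u_n))(0) \ge \int_0^{1/2} \phi_{p'}(c_0 s)\,\dif s = c_0^{p^*} \int_0^{1/2} s^{p^*}\,\dif s =: c_1 > 0$, independent of $n$.

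Putting it together: $|u_n|_0 = \lam_n^{p^*} S_p(f(u_n))(0) \ge c_1 \lam_n^{p^*} \to \infty$, contradicting the assumed boundedness of $|u_n|_0$. Hence $|u_n|_0 \to \infty$ (and since the argument applies to any subsequence, the whole sequence blows up). The main obstacle I anticipate is the case distinction forced by the fact that $f$ is merely positive but not bounded below on bounded $\xi$-intervals; the cleanest resolution is precisely to offload the small-solution case to Lemma~\ref{asympt.lem} and then work on a region where $u_n$ is bounded below by a positive constant, where the concavity coming from $N=1$ does the geometric work. One should double-check that the integration is over the correct interval and that the estimate only uses $u_n(t) \ge \tfrac12\de_0$ on $[0,1/2]$, which follows from concavity and $u_n(1) = 0$ together with $u_n(0) = |u_n|_0 \ge \de_0$.
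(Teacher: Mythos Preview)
Your proof is correct, but it takes a somewhat different route from the paper's. Both arguments start from the integral representation $|u_n|_0 = u_n(0) = \lam_n^{p^*}\int_0^1 \phi_{p'}\big(\int_0^s f(t,u_n)\,\dif t\big)\,\dif s$ (with $N=1$) and use concavity to control $u_n$ on $[0,1/2]$. The difference lies in how the integrand is bounded below.

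You bound $f(t,u_n(t))$ from below by a positive constant $c_0$ on a compact box $[0,1/2]\times[\tfrac12\de_0,M]$, which forces you first to dispose of the possibility $|u_n|_0\to 0$ by appealing to Lemma~\ref{asympt.lem}. The paper avoids this case split by writing $f(t,u_n)=g(t,u_n)\,u_n^{p-1}$ and using (H3), which makes $\xi\mapsto g(t,\xi)$ decreasing, to get $g(t,u_n(t))\ge g(t,R)$ whenever $u_n(t)\le R$. Pulling the factor $u_n(1/2)^{p-1}$ through $\phi_{p'}$ and using concavity ($u_n(1/2)\ge\tfrac12|u_n|_0$) yields directly $|u_n|_0\ge C\lam_n^{p^*}|u_n|_0$, hence $\lam_n^{p^*}\le C^{-1}$, with no lower bound on $|u_n|_0$ ever needed. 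The paper's argument is thus self-contained and slightly slicker; yours is equally valid and perhaps more elementary in spirit, at the cost of invoking Lemma~\ref{asympt.lem} and a case distinction.
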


\begin{proof}
By contradiction, suppose there exists a constant $R>0$ and a subsequence, still denoted by $(\lam_n,u_n)$, such that $|u_n|_0\le R$ for all $n$. Then, by (H2) and (H3),
\begin{align*}
|u_n|_0=u_n(0)	&= \int_0^1\phi_{p'}\left(\int_0^s\lam_n f(t,u_n)\diff t \right)\diff s\\
						&\ge \lam_n^{p^*} \int_0^{1/2} \phi_{p'}
						\left(\int_0^s g(t,u_n)u_n(t)^{p-1}\diff t \right)\diff s\\
						&\ge \lam_n^{p^*} u_n(1/2) \int_0^{1/2} \phi_{p'}
						\left(\int_0^s g(t,R)\diff t \right)\diff s\\
						&\ge \lam_n^{p^*} C|u_n|_0,
\end{align*}
where the last inequality follows from the concavity of the solutions $u_n$ on $[0,1]$.
Hence $\lam_n^{p^*}\le C^{-1}<\infty$, a contradiction.
\end{proof}

We still need to prove the uniqueness statement of Theorem~\ref{main.thm}, that is,
$\calS_0=\calS$. Suppose instead that there exists 
$(\bar\lam,\bar u) \in \calS\sm\calS_0$, and let $\calS_1$ be the connected subset of
$\calS$ such that $(\bar\lam,\bar u) \in \calS_1$. It follows by Lemma~\ref{IFT.lem}
that $\calS_1$ is a smooth curve, parametrized by $\lam$ in a maximal interval 
$I_1$. In fact, the previous arguments imply that $I_1=(\lam_0,\lam_\infty)$. Let us 
denote by $u_1:(\lam_0,\lam_\infty)\to Y$ the parametrization of $\calS_1$ and
consider a sequence $\lam_n\to\lam_0$. Since $|u_1(\lam_n)|_0$ is bounded
by Lemma~\ref{asympt.lem}, it follows that there exists $u_0 \in Y$ such that
$u(\lam_n)\to u_0$ in $Y$ as $n\to\infty$. Then by continuity, we have
\begin{equation*}
u_0=S_p(\lam_0f(u_0)).
\end{equation*}
Since $u_1(\lam_n)\ge0$ for all $n$, it follows that $u_0\ge0$. We will show that, in fact,
$u_0\equiv0$. Hence we will have $(\lam_n,u_1(\lam_n))\to (\lam_0,0)$ in $Y$
and, by the characterization \eqref{loc1.eq} in Theorem~\ref{loc1.thm}, 
$\calS_1=\calS_0$.
If $u_0\not\equiv0$, we set $w_0=u_0/|u_0|_0$. Then $w_0\ge0$ and satisfies
\begin{equation}\label{final.eq}
w_0=S_p(\lam_0g(u_0)\phi_p(w_0)).
\end{equation}
Having in mind (H3) and \eqref{boundsforg}, it follows from the comparison
theorem of \cite[Sec.~4]{w} applied to \eqref{final.eq} and ($\mathrm{E}_0$) that
we must have $w_0\equiv0$. This contradiction finishes the proof of 
Theorem~\ref{main.thm}.\hfill $\Box$

\subsection{Proof of Theorem~\ref{main2.thm}}

We start with the analogue of Lemma~\ref{IFT.lem} under hypothesis (H').

\begin{lemma}\label{IFT'.lem}
The function $F \in C([0,\infty)\x Y, Y)$ defined in \eqref{dirop.eq} is continuously
differentiable in a neighbourhood of any point $(\lam,u) \in \calS^\pm$, with
$$
D_u F(\lam,u) v = v - \lam DS_p(\lam f(u)) \partial_2 f(u) v, \quad v\in Y.
$$
Furthermore, for any $(\lam,u) \in \calS^\pm$, $D_u F(\lam,u):Y\to Y$ is an isomorphism.
\end{lemma}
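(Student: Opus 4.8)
The plan is to mirror the proof of Lemma~\ref{IFT.lem} almost verbatim, making the sign adjustments needed to handle solutions in $\calS^-$ as well as $\calS^+$. First I would record that the differentiability of $F$ near $(\lam,u)\in\calS^\pm$ follows from Theorem~\ref{Sp.thm}(ii) together with Proposition~\ref{pos'.prop}: the latter guarantees that $\pm u>0$ on $[0,1)$ with $\pm u'(1)<0$, so that $u'$ has only a simple zero at $r=0$ and hence $u(h_0)'(r)=0\Rightarrow h_0(r)\neq0$ with $h_0=\lam f(u)$, which is exactly the hypothesis making $S_p$ of class $C^1$ near $\lam f(u)$. The formula for $D_uF(\lam,u)$ is then immediate, and since $DS_p$ factors through the compact operator $\calI\circ J$ (cf.\ Theorem~\ref{Spcont.thm}), $D_uF(\lam,u)$ is a compact perturbation of the identity on $Y$; therefore it suffices to show $N(D_uF(\lam,u))=\{0\}$.

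For the kernel computation I would take $v\in N(D_uF(\lam,u))$ and use \eqref{lineariz.eq} to see that $v$ solves the same linear boundary value problem \eqref{eqforv} as in Lemma~\ref{IFT.lem}. Combining this with the equation \eqref{dirad.eq} for $u$ (multiply \eqref{eqforv} by $u$, \eqref{dirad.eq} by $v$, subtract, integrate by parts) gives the Lagrange-type identity \eqref{lagrange} verbatim. Then the argument splits according to whether $(\lam,u)\in\calS^+$ or $\calS^-$. In the $\calS^+$ case the proof of Lemma~\ref{IFT.lem} applies word for word, using (H3) (which is part of (H')) on $[0,1]\times[0,\infty)$ and the strict inequality on $(1-\de,1]\times(0,\ep)$. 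In the $\calS^-$ case $u<0$ on $[0,1)$, so I would work with $-u>0$; the bracketed integrand in \eqref{lagrange} becomes $[f(u)-p^*\partial_2 f(u)u]$ evaluated on negative arguments, and here the sign information is supplied by (H3'), namely $(p-1)f(r,\xi)\le\partial_2 f(r,\xi)\xi$ for $\xi\le0$ with strict inequality near $r=1$ for $\xi\in(-\ep,0)$. This makes $f(u)-p^*\partial_2 f(u)u\le0$ (and $\not\equiv0$ near $r=1$), while simultaneously $v$ must change sign relative to $-u$ in the same way $v$ relates to $u$ in the positive case, so the same contradiction at the smallest zero $r_1$ of $v$ (or at $r_1=1$, using $\pm u'(1)<0$ from Proposition~\ref{pos'.prop}) goes through. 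Hence $v\equiv0$ and $D_uF(\lam,u)$ is injective, therefore an isomorphism.

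The only genuinely delicate point is bookkeeping the signs in the $\calS^-$ branch: one must check that the product $|u'|^{p-2}u(r_1)v'(r_1)$ and the sign of the right-hand integral in \eqref{lagrange} are genuinely incompatible, which requires pairing the reversed inequality (H3') with the correct normalization of $v$ on $(0,r_1)$. I expect this to be the main (though modest) obstacle; everything else is a transcription of the proof of Lemma~\ref{IFT.lem}. I would therefore write the proof as: ``The differentiability and the formula for $D_uF$ follow as in Lemma~\ref{IFT.lem}, now using Proposition~\ref{pos'.prop}. For the injectivity, the case $(\lam,u)\in\calS^+$ is identical to Lemma~\ref{IFT.lem}. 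For $(\lam,u)\in\calS^-$, replacing $u$ by $-u$ and using (H3') in place of (H3) in the identity \eqref{lagrange}, the same argument at the smallest zero of $v$ yields a contradiction, so $N(D_uF(\lam,u))=\{0\}$.''
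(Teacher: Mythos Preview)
Your proposal is correct and follows essentially the same approach as the paper's proof: differentiability via Theorem~\ref{Sp.thm}(ii) together with Proposition~\ref{pos'.prop}, compactness to reduce to injectivity, and then the Lagrange identity \eqref{lagrange} combined with (H3) for $\calS^+$ and (H3') for $\calS^-$ to derive a sign contradiction at the first zero of $v$. The only cosmetic difference is that the paper argues directly with $u<0$ in the $\calS^-$ case (noting $u(r_2)v'(r_2)>0$ while the integral is $\le0$) rather than ``replacing $u$ by $-u$'', but your sign bookkeeping is equivalent.
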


\begin{proof}
The proof is almost identical to that of Lemma~\ref{IFT.lem}, so we only indicate
the minor modifications. The differentiability part follows as in Lemma~\ref{IFT.lem}, 
using Theorem~\ref{Sp.thm}(ii), and Proposition~\ref{pos'.prop} instead of
Proposition~\ref{pos.prop}. The non-singularity of $D_u F(\lam,u):Y\to Y$ follows in the
same way if $(\lam,u)\in\calS^+$. For $(\lam,u)\in\calS^-$, we proceed in a similar 
manner, considering $v\in N(D_u F(\lam,u))$. Then the identity \eqref{lagrange}
still holds, and we suppose by contradiction that $v>0$ on a maximal interval
$(0,r_2)$, with $v(r_2)=0$ and $v'(r_2)<0$. If $r_2<1$, we have $u(r_2)v'(r_2)>0$ 
while (H3') implies
\begin{equation*}
r_2^{N-1}|u'(r_2)|^{p-2}u(r_2)v'(r_2)=
\lam\int_0^{r_2} s^{N-1}[f(u)-p^*\partial_2f(u)u]v\diff s \le 0,
\end{equation*}
a contradiction. On the other hand, if $r_2=1$, it follows from (H3') and 
Proposition~\ref{pos'.prop} that
$$
0=|u'(1)|^{p-2}u(1)v'(1)=
\lam \int_0^{1} s^{N-1}[f(u)-p^*\partial_2f(u)u]v\diff s < 0,
$$
another contradiction. Hence $v\equiv0$ and
$N(D_u F(\lam,u))=\{0\}$.
\end{proof}

Now, similarly to the proof of Theorem~\ref{main.thm}, using 
Remark~\ref{lambounds'.rem}, Theorem~\ref{loc2.thm} and Lemma~\ref{IFT'.lem}, 
the implicit function theorem yields maximal open intervals $(\lam_0,\wt\lam_\pm)$
with $\lam_0<\wt\lam_\pm\le\lam_\infty$ and two solution curves
$u_\pm \in C^1((\lam_0,\wt\lam_\pm),Y)$. 
It follows as in the proof of Theorem~\ref{main.thm} that 
$\wt\lam_\pm=\lam_\infty$, so we get two global solution curves
$$
\calS_0^\pm=\{(\lam,u_\pm(\lam)):\lam\in(\lam_0,\lam_\infty)\}\subset\calS.
$$

Furthermore, $\lim_{\lam\to\lam_\infty}|u(\lam)|_0=\infty$ follows as in the proof 
of Theorem~\ref{main.thm}, using the version of Lemma~\ref{blowup.lem} holding under 
hypothesis (H'):

\begin{lemma}\label{blowu'p.lem}
Suppose that (H6)(b) holds, and consider 
$(\lam_n,u_n)\in\calS^\pm$ with $\lam_n\to\infty$. Then $|u_n|_0\to\infty$.
\end{lemma}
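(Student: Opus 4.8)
\emph{Plan of proof.} I would handle the two signs separately. On $\calS^+$ there is nothing to do: the proof of Lemma~\ref{blowup.lem} uses only (H2) and (H3), both of which are contained in (H2') and (H3'), so the identical computation gives $|u_n|_0\to\infty$ for any sequence $(\lam_n,u_n)\in\calS^+$ with $\lam_n\to\infty$. So suppose $(\lam_n,u_n)\in\calS^-$ with $\lam_n\to\infty$ and argue by contradiction: assume $|u_n|_0\le R$ along a subsequence. Put $w_n:=-u_n$; by Proposition~\ref{pos'.prop}, $w_n$ is positive and decreasing on $[0,1)$, $w_n(1)=0$, and $w_n(0)=|w_n|_0=|u_n|_0$, while $u_n(t)\in[-R,0)$ for all $t\in[0,1)$.

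Since $u_n(t)<0$ on $(0,1)$, (H2') gives $f(t,u_n(t))<0$ there; using $N=1$, the representation $u_n=\lam_n^{p^*}S_p(f(u_n))$ from \eqref{dirop.eq}, and the oddness of $\phi_{p'}$, I would write
\[
w_n(0)=\lam_n^{p^*}\int_0^1\phi_{p'}\Bigl(\int_0^s|f(t,u_n(t))|\diff t\Bigr)\diff s .
\]
Next I would estimate the integrand from below. Writing $|f(t,u_n(t))|=\bigl(f(t,u_n(t))/\phi_p(u_n(t))\bigr)w_n(t)^{p-1}$ and noting that, just as (H3) makes $\xi\mapsto f(r,\xi)/\phi_p(\xi)$ decreasing on $(0,\infty)$, the first inequality in (H3') makes it \emph{nondecreasing} on $(-\infty,0)$, we get $f(t,u_n(t))/\phi_p(u_n(t))\ge c(t):=f(t,-R)/\phi_p(-R)$ because $-R\le u_n(t)<0$; here $c(t)>0$ by (H2'). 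Combining this with $w_n(t)\ge w_n(1/2)$ for $t\le1/2$, discarding the (nonnegative) integrand over $[1/2,1]$, and using the $p^*$-homogeneity of $\phi_{p'}$, I obtain
\[
w_n(0)\ge\lam_n^{p^*}\,w_n(1/2)\int_0^{1/2}\phi_{p'}\Bigl(\int_0^s c(t)\diff t\Bigr)\diff s=:C\,\lam_n^{p^*}\,w_n(1/2),\qquad C>0 .
\]
Finally, since $N=1$ and $f(t,u_n(t))<0$, equation \eqref{dirad.eq} shows that $\phi_p(u_n')$ is nondecreasing, hence $u_n$ is convex and $w_n$ is concave on $[0,1]$; with $w_n(1)=0$ this yields $w_n(1/2)\ge\tfrac12w_n(0)$. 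Substituting gives $\lam_n^{p^*}\le 2/C$, contradicting $\lam_n\to\infty$.

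I do not anticipate a genuine obstacle; this is essentially a sign-mirrored copy of Lemma~\ref{blowup.lem}. The only points that need care are the bookkeeping when passing through the odd map $\phi_{p'}$, and the fact that the monotonicity of $f(r,\cdot)/\phi_p(\cdot)$ is now \emph{increasing} on the negative half-line, so the relevant lower bound is its value at the fixed endpoint $-R$; the strict positivity $c(t)>0$ needed to guarantee $C>0$ is exactly what (H2') supplies, and the concavity of $w_n$ used at the end is where (H6)(b), i.e. $N=1$, enters, just as in Lemma~\ref{blowup.lem}.
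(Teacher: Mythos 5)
Your proof is correct and follows essentially the same route as the paper's: reduce to the positive case for $\calS^+$, and for $\calS^-$ run the same lower-bound chain on $-u_n(0)$ using the monotonicity of $\xi\mapsto f(r,\xi)/\phi_p(\xi)$ on the negative half-line, the homogeneity of $\phi_{p'}$, and the concavity of $-u_n$ coming from $N=1$. Your bookkeeping is in fact slightly more careful than the paper's, which compares $g(t,u_n)$ to $g(t,R)$ rather than to $g(t,-R)$ as the monotonicity on $(-\infty,0)$ actually warrants.
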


\begin{proof}
The proof is the same as that of Lemma~\ref{blowup.lem} when 
$(\lam_n,u_n)\in\calS^+$. 

For $(\lam_n,u_n)\in\calS^-$, if $|u_n|_0\le R$, it follows by (H2') and (H3') that
\begin{align*}
|u_n|_0=-u_n(0)&=-\int_0^1\phi_{p'}\left(\int_0^s\lam_n f(t,u_n)\diff t \right)\diff s\\
						&\ge \lam_n^{p^*} \int_0^{1/2} \phi_{p'}
						\left(\int_0^s g(t,u_n)|u_n(t)|^{p-1}\diff t \right)\diff s\\
						&\ge \lam_n^{p^*} |u_n(1/2)| \int_0^{1/2} \phi_{p'}
						\left(\int_0^s g(t,R)\diff t \right)\diff s\\
						&\ge \lam_n^{p^*} C|u_n|_0,
\end{align*}
showing that the sequence $\{\lam_n\}$ must be bounded, a contradiction.
\end{proof}

Using the characterization \eqref{loc2.eq} in Theorem~\ref{loc2.thm}, 
it follows similarly to the last part of
the proof of Theorem~\ref{main.thm} that $\calS=\calS_0^-\cup\calS_0^+$. 
Hence, to finish the proof of Theorem~\ref{main2.thm}, we only need to prove that
$\calS_0^-\subset\calS^-$ and $\calS_0^+\subset\calS^+$.

Let $\calC_0^\pm:=\calS_0^\pm\cap\calS^\pm$. 
First, we know from Theorem~\ref{loc2.thm}
that, for $\lam$ close to $\lam_0$, $(\lam,u_\pm(\lam))\in\calS^\pm$, so that 
$\calC_0^\pm\neq\emptyset$.
Thus, we need only show that $\calC_0^\pm$
is both open and closed in $\calS_0^\pm$, for the product topology inherited from 
$\real\x Y$. We will only consider $\calC_0^+$, the proof for $\calC_0^-$ is similar.

For $(\lam,u)\in \calC_0^+$, it follows from 
Proposition~\ref{pos'.prop} that $u>0$ on $[0,1)$ (with $u(1)=0$). 
If $(\mu,v)\in \calS_0^+$ with 
$|\mu-\lam|+|v-u|_0$ small enough, we will have $\lam\in(\lam_0,\lam_\infty)$ and 
$0\not\equiv v\ge0$ on $[0,1]$, hence $(\mu,v)\in\calC_0^+$. 
This proves that $\calC_0^+$ is open in $\calS_0^+$.

Now consider a sequence $\{(\lam_n,u_n)\}\subset\calC_0^+$ and suppose there exists 
$(\lam,u)\in \calS_0^+$ such that $(\lam_n,u_n) \to (\lam,u)$.
By continuity, $F(\lam,u)=0$, and we have $u\ge0$. Since $\lam>\lam_0$, 
it follows from Lemma~\ref{asympt'.lem} that $u\not\equiv0$. Hence,
$(\lam,u)\in\calS_0^+$ and $\calC_0^+$ is closed in $\calS_0^+$.
Thus, $\calC_0^+=\calS_0^+$, and it follows in a similar way that $\calC_0^-=\calS_0^-$,
showing that $\calS_0^\pm\subset\calS^\pm$, and so actually 
$\calS_0^\pm=\calS^\pm$. This completes the proof of Theorem~\ref{main2.thm}.
\hfill$\Box$

\end{document}